\newtheorem{theorem}{Theorem}[section]
\newtheorem{proposition}[theorem]{Proposition}
\newtheorem{lemma}[theorem]{Lemma}
\newtheorem{corollary}[theorem]{Corollary}
\theoremstyle{definition}
\newtheorem{definition}[theorem]{Definition}                  
\newtheorem{example}[theorem]{Example}
\theoremstyle{remark}
\newtheorem{remark}[theorem]{Remark}
\newcommand{\CC}{\mathbb{C}}
\newcommand{\NN}{\mathbb{N}}
\newcommand{\ZZ}{\mathbb{Z}}
\newcommand{\PP}{\mathbb{P}}
\newcommand{\wPP}{{\PP^\vee}}
\newcommand{\A}{\mathcal{A}}
\newcommand{\B}{\mathcal{B}}
\renewcommand{\P}{\mathcal{P}}
\newcommand{\D}{\mathcal{D}}
\newcommand{\E}{\mathcal{E}}
\newcommand{\R}{\mathcal{R}}
\newcommand{\C}{\mathcal{C}}
\newcommand{\M}{\mathcal{M}}
\renewcommand{\L}{\mathcal{L}}
\DeclareMathOperator{\Sing}{Sing}
\DeclareMathOperator{\Irr}{Irr}
\DeclareMathOperator{\PGL}{PGL}
\DeclareMathOperator{\Arr}{Arr}
\DeclareMathOperator{\naive}{d^{naive}}
\DeclareMathOperator{\St}{St}
\DeclareMathOperator{\Net}{Net}
\DeclareMathOperator{\bv}{\mathbf{v}}
\DeclareMathOperator{\compo}{CC}
\newcommand{\set}[1]{\left\{ #1 \right\}}
\newcommand{\gen}[1]{\left\langle #1 \right\rangle}
\newcommand{\X}{\mathfrak{X}}
\newcommand{\ol}[1]{\overline{#1}}
\newcommand{\pprec}{\prec\mkern-5mu\prec}
\let\origsection\section
\renewcommand\section{\@ifstar{\starsection}{\nostarsection}}
\newcommand\nostarsection[1]
\sectionprelude\origsection{#1}\sectionpostlude}
\newcommand\starsection[1]
\newcommand\sectionprelude{%
  \vspace{2em}
}
\newcommand\sectionpostlude{%
  \vspace{1em}
}
\let\origsubsection\subsection
\renewcommand\subsection{\@ifstar{\starsubsection}{\nostarsubsection}}
\newcommand\nostarsubsection[1]
\subsectionprelude\origsubsection{#1}\subsectionpostlude}
\newcommand\starsubsection[1]
\newcommand\subsectionprelude{%
  \vspace{0.5em}
}
\newcommand\subsectionpostlude{%
  \vspace{0em}
}
\let\origsubsubsection\subsubsection
\renewcommand\subsubsection{\@ifstar{\starsubsubsection}{\nostarsubsubsection}}
\newcommand\nostarsubsubsection[1]
\subsubsectionprelude\origsubsubsection{#1}\subsubsectionpostlude}
\newcommand\starsubsubsection[1]
\newcommand\subsubsectionprelude{%
  \vspace{0.5em}
}
\newcommand\subsubsectionpostlude{%
  \vspace{0em}
}
\begin{document}

\title{Connectedness and combinatorial interplay in the moduli space of line arrangements}

\author[B. Guerville-Ball\'e]{Beno\^it Guerville-Ball\'e}
			 \address[B. Guerville-Ball\'e]{}
			 \urladdr{\url{http://www.benoit-guervilleballe.com/}}
			 \email{\href{benoit.guerville-balle@math.cnrs.fr}{benoit.guerville-balle@math.cnrs.fr}}
			 
\author[J.~Viu-Sos]{Juan Viu-Sos}
\address[J.~Viu-Sos]{
	Dpto. de Matemáticas e Informática (DMIAICN)\newline\indent
    ETSI Caminos, Canales y Puertos\newline\indent
    Universidad Politécnica de Madrid\newline\indent
    C\textbackslash Prof. Aranguren 3, 28040 Madrid, Spain
}
\urladdr{\url{https://jviusos.github.io/}}
\email{\href{mailto:juan.viu.sos@upm.es}{juan.viu.sos@upm.es}}

\thanks{The first author was supported by RIMS and JSPS KAKENHI Grant JP23H00081 (PI: Masahiko Yoshinaga). The second author was supported by the Spanish grants PID2020-114750GB-C32 and MTM2016-76868-C2-2-P}				

\subjclass[2010]{
14H10, 
14N20, 
51A45, 
14N10, 
}		

\begin{abstract}
	This paper aims to undertake an exploration of the behavior of the moduli space of line arrangements while establishing its combinatorial interplay with the incidence structure of the arrangement. In the first part, we investigate combinatorial classes of arrangements whose moduli space is connected. We unify the classes of simple and inductively connected arrangements appearing in the literature. Then, we introduce the notion of arrangements with a rigid pencil form. It ensures the connectedness of the moduli space and is less restrictive that the class of $C_3$ arrangements of simple type. In the last part, we obtain a combinatorial upper bound on the number of connected components of the moduli space. Then, we exhibit examples with an arbitrarily large number of connected components for which this upper bound is sharp.
\end{abstract}

\maketitle

\setcounter{tocdepth}{1}
\tableofcontents

\section{Introduction}

Complex line arrangements are fundamental objects in algebraic geometry, algebraic topology, and also combinatorics. The moduli space of a line arrangement $\A$ is a geometrical object which describes isomorphism classes, under the natural action of $\PGL_3(\CC)$, of arrangements that are lattice-isomorphic to~$\A$. The topological study of the moduli space is extremely important in this context, e.g.~arrangements in the same connected component of the moduli space have diffeomorphic complements~\cite{Ran:lattice_isotopy}. In this paper, we focus on a combinatorial study of the moduli space of line arrangements and we explore how such a study can provide fundamental information about its geometry and topology.

\medskip

As proved by Mn\"ev in~\cite{Mnev}, the moduli space of a line arrangement can behave as badly as one can imagine. Vakil qualifies such behavior as \emph{Murphy's law}~\cite{Vak}. More precisely, Mn\"ev Universality Theorem states that every singularity of finite type over $\ZZ$ appears in at least one moduli space. Recently, it was shown in~\cite{CL:singular_moduli} that the realization space of line arrangements is smooth for $|\A|\leq 11$ and then nodal singularities appear for $|\A|=12$. In addition, there are several basic geometric or topological aspects of the moduli space that can not be fully predicted by a combinatorial study, as it is shown by the classical Pappus’ hexagon theorem. Thus, a natural question follows: which property of the moduli space can be combinatorially deduced?\\

We recall, in Section~\ref{sec:arr_combi}, the definitions of the realization space and moduli space of a line arrangement. Also,  we develop a systematic study of the inductive construction of the moduli space of ordered arrangements by introducing the notion of the \emph{type} (Definition~\ref{def:type}). Roughly speaking, it encodes the combinatorial complexity of the line-by-line construction of an ordered arrangement by counting the multiple points contained in the new line at each step.

Section~\ref{sec:families} is dedicated to the investigation of several known combinatorial classes of arrangements with connected moduli spaces: e.g.~the class of \emph{nice} arrangements~\cite{JiaYau} and their generalization to \emph{simple} arrangements~\cite{WangYau}, or the one of \emph{inductively connected} arrangements and the \emph{$C_3$ arrangements of simple type} both introduced in~\cite{NazirYoshinaga12}. %
We prove in Theorem~\ref{thm:simple_arrangement} that any generalized simple arrangement is inductively connected, answering a question from Nazir and Yoshinaga (see the introduction of~\cite{NazirYoshinaga12}). In the last part of this section, we first introduce the combinatorial class of \emph{inductively rigid} arrangements whose moduli space is a unique point. Using this as a substructure, we define the class of arrangements with a \emph{rigid pencil form} (Definition~\ref{def:rigid_pencil}), which contains all the $C_3$ arrangements of simple type. Likewise, any arrangement in this class has a connected moduli space (Theorem~\ref{thm:C3_generalization}).

In the latter part of this paper (Section~\ref{sec:irred_components}), we introduce an upper bound on the number of connected components of the moduli space of an arrangement. We first define the notion of \emph{$m$-perturbation} which transforms any combinatorics into an inductively connected one (Definition~\ref{def:m_perturbation}). By the work of Nazir and Yoshinaga~\cite{NazirYoshinaga12}, this allows to embed the moduli space of our original arrangement in an affine space of a relatively small dimension (Theorem~\ref{thm:moduli_embedding}). From this embedding, we derive an upper bound of the number of connected components of the moduli space (Theorem~\ref{thm:upper_bound}) by inductively bounding the degree of the $m$ polynomial conditions arising from the $m$-perturbation. Finally, by providing explicit examples, we prove that the upper bound obtained is sharp even for non-trivial cases for which the number of connected components is arbitrarily large (Theorem~\ref{thm:sharpness}).


\section{Moduli spaces of line arrangements}\label{sec:arr_combi}

Throughout all the paper, a line arrangement $\A$ refers to a finite set of lines $\{\ell_1,\dots,\ell_n\}$ in the complex projective plane $\CC\PP^2$. In this section, we will recall some basic definitions about moduli space of line arrangements. We will also fix some notations and define the type of an ordered arrangement and the naive dimension of its moduli space.

\subsection{Line combinatorics and line arrangements}\mbox{}

\begin{definition}\label{def:combinatorics}
	An \emph{abstract line combinatorics} is a pair $\C=(\L,\P)$ where:
	\begin{itemize}
	  \item $\L$ is the data of an ordered set of $n$ \emph{lines}, codified by the list $\L=\set{1,\ldots,n}$,
	  \item $\P$ is a finite subset of the power set of $\L$ codifying the \emph{singular points}, satisfying:
	\begin{enumerate}
	  \item for all $P\in\P$, one has $|P|\geq 2$,
	  \item for all $i,j\in\L$, there exists a unique $P\in\P$ such that $i,j\in P$.
	\end{enumerate}
	\end{itemize}
	Two abstract line combinatorics $\C=(\L,\P)$ and $\C'=(\L',\P')$ are called \emph{equivalent}, denoted $\C\simeq\C'$, if there exists a (non-necessarily order-preserving) bijection
	$\Phi:\L\to\L'$ such that $\Phi(\P)=\P'$.
\end{definition}

\begin{remark}
	Whenever there is no confusion, we will summarize an abstract line combinatorics $\C$ by describing only the set $\P$.
\end{remark}

Let $\Arr_n$ be the set of all the line arrangements of $n$ lines. Given $\A\in\Arr_n$, its abstract line combinatorics $\C(\A)$ (or shortly its \emph{combinatorics}) is given by $(\A,\P)$, with
\begin{equation*}
	\P=\left\{
		P\subset \A \,\left\vert\, \bigcap_{L\in P}L \neq \emptyset\;\text{ and }\; \forall L'\in \A\setminus P:\; L'\cap \bigcap_{L\in P} L  = \emptyset \right.
	\right\}.
\end{equation*}
A \emph{multiple point} of $\C(\A)$ is an element $P\in\P$ such that $|P|\geq 3$. Singular points which are not multiple points are called \emph{double points}. An abstract line combinatorics $\C$ is just called \emph{line combinatorics} if there exists an arrangement $\A$ such that $\C\simeq\C(\A)$. By Pappus' hexagon theorem, we know there exist abstract line combinatorics which are not line combinatorics.

\begin{example}\label{ex:Ceva}
	Let $\A$ be the Ceva arrangement defined by the following equations:
	\[   
	  \ell_1: x=0,\quad \ell_2: y=0,\quad \ell_3: z=0,\quad \ell_4: x-y=0,\quad \ell_5: x+z = 0\quad \&\quad \ell_6: y+z=0.
	\]
	Its combinatorics is then:
	\[   
	  \C(\A)=\set{\set{\ell_1,\ell_2,\ell_4},\, \set{\ell_1,\ell_3,\ell_5},\, \set{\ell_1,\ell_6},\, \set{\ell_2,\ell_3,\ell_6},\, \set{\ell_2,\ell_5},\, \set{\ell_3,\ell_4}, \set{\ell_4,\ell_5,\ell_6} }.
	\]
\end{example}

In the following, we will consider two particular families of line arrangements. Let $n$ be a positive natural number. We define:
\begin{itemize}
  \item the family of arrangements formed by a pencil of $n$ lines, denoted by $\X(n)\subset\Arr_n$,
  \item the family of arrangements formed by a pencil of $n-1$ lines and a unique generic line, denoted by $\ol{\X}(n)\subset\Arr_n$.
\end{itemize}

\begin{figure}[h]
	\begin{tikzpicture}[scale=0.5]
		\tikzstyle{droites}=[line width=1pt]
		\begin{scope}
			\draw[droites] (-2,2) -- (2,-2);
			\draw[droites] (-2,0) -- (2,0);
			\draw[droites] (-2,-2) -- (2,2);
			\draw[droites] (0,-2) -- (0,2);
			\node at (0,-3.25) {$\A\in\X(4)$};
		\end{scope}
		\begin{scope}[xshift=300]
			\draw[droites] (-2,2) -- (2,-2);
			\draw[droites] (-2,1) -- (2,1);
			\draw[droites] (-2,-2) -- (2,2);
			\draw[droites] (0,-2) -- (0,2);
			\node at (0,-3.25) {$\A'\in\ol{\X}(4)$};
		\end{scope}
	\end{tikzpicture}
	\caption{Arrangements in the classes $\X(4)$ and $\ol{\X}(4)$.\label{fig:Xn}}
\end{figure}

\subsection{Realization and moduli spaces}\label{sec:moduli_space}\mbox{}

\begin{definition}\label{def:realization_space}
	Let $\C=(\L,\P)$ be an abstract line combinatorics, and let $n$ be the cardinality of $\L$.
	The \emph{realization space} of $\C$ is the set:
	\begin{equation*}
		\R(\C) =  \left\{ \B \in \Arr_n \mid \C(\B)\simeq \C \right\}.
	\end{equation*}
	The \emph{moduli space} $\M(\C)$ of the abstract line combinatorics $\C$ is the quotient of the realization space $\R(\C)$ by the action of $\PGL_3(\CC)$. For brevity sake, we set $\R(\A):=\R(\C(\A))$ and $\M(\A):=\M(\C(\A))$ for any $\A\in\Arr_n$ with combinatorics $\C(\A)$.
\end{definition}

A line $\ell: a x + b y + c z = 0$ in $\CC\PP^2$ %
can be consider as a point $(a:b:c)$ in the dual $\CC\wPP^2$, then an element of $\Arr_n$ is naturally identified with a point in $\big(\CC\wPP^2\big)^n$. 
Three different lines $\ell_i = (a_i:b_i:c_i)$, $\ell_j= (a_j:b_j:c_j)$ and $\ell_k = (a_k:b_k:c_k)$ are \emph{concurrent} if and only if
\begin{equation*}
	\Delta_{i,j,k}:=\det(\ell_i,\ell_j,\ell_k)=
	\begin{vmatrix}
		a_i & a_j & a_k \\
		b_i & b_j & b_k \\
		c_i & c_j & c_k 
	\end{vmatrix} = 0.
\end{equation*}
By definition, the {realization space} of a line combinatorics $\C(\A)$ with $\A\in\Arr_n$ can be constructed as a subset of $\big(\CC\wPP^2\big)^n$ as follows:
\begin{equation*}
	\R(\A)=\left\{
	\begin{array}{c|ll}
		& \ell_i \neq \ell_j, & \forall i\neq j \\
		(\ell_1,\dots,\ell_n) \in \big(\CC\wPP^2\big)^n & \Delta_{i,j,k}=0, & \text{if $\{i,j,k\}\subset P$, for a $P\in\P$} \\
		& \Delta_{i,j,k} \neq 0, & \text{otherwise} \\
	\end{array}
	\right\}.
\end{equation*}

If $\A$ is not in $\X(n)$ or $\ol{\X}(n)$, then it contains four lines in general position. Up to a relabelling, one can assume that these lines are $\ell_1,\ldots,\ell_4$. Any isomorphism class in $\M(\A)$ admits a unique representative\footnote{Throughout this paper any isomorphism class of $\M(\A)$ is identified with this unique representative.} in $\R(\A)$ such that $\ell_1,\ldots,\ell_4$ is the arrangement $S_0$ formed by the lines $x=0$, $x-z=0$, $y=0$ and $y-z=0$. So the moduli space $\M(\A)$ can be considered in $\big(\CC\wPP^2\big)^{n}$ as the trivial fibration over the point $S_0\in\big(\CC\wPP^2\big)^{4}$ as follows:
\begin{equation}\label{eqn:moduli_space}
	\M(\A)= \{S_0\} \times
	\left\{
	\begin{array}{c|ll}
		& \ell_i \neq \ell_j, & \forall i\neq j \in\{1,\dots,n\}\\
		(\ell_5,\dots,\ell_n) \in \big(\CC\wPP^2\big)^{n-4} & \Delta_{i,j,k}=0, & \text{if $\{i,j,k\}\subset P$, for a $P\in\P$} \\
		& \Delta_{i,j,k} \neq 0, & \text{otherwise} \\
	\end{array}
	\right\}.
\end{equation}

It follows that both $\R(\A)$ and $\M(\A)$ are (quasi-projective, non-necessarily irreducible) algebraic varieties and so the notions of \emph{dimension} and \emph{irreducible components} are well defined.

\begin{remark}\label{rmk:moduli_pencils}
  Let $\A$ be in $\X(n)$ (resp.~in $\ol{\X}(n)$), the moduli space $\M(\A)$ is irreducible of dimension $\max(0,n-3)$ (resp. $\max(0,n-4)$). If $\A$ is neither in $\X(n)$ nor in $\ol{\X}(n)$, one has that $\dim_\CC\M(\A) = \dim_\CC \R(\A)-8$.
\end{remark}

\subsection{Type of an arrangement and naive dimension of the moduli space}\label{sec:type}\mbox{}

An \emph{ordered arrangement} is a pair $(\A,\omega)$, where $\A\in\Arr_n$ and $\omega$ is a bijective map $$\omega: \A\longrightarrow\set{1,\ldots,n},$$ called the \emph{order} on $\A$. For the sack of brevity, we will sometime express an order arrangement as an ordered list $\A=\set{\ell_1,\ldots, \ell_n}$ where the indices of the lines are chosen in a compatible way with the order, i.e. such that $\omega(\ell_i)=i$, for any $\ell_i\in\A$.

Let $(\A,\omega)$ be an ordered arrangement and $\B$ be a subarrangement of $\A$. There exists a unique order $\omega_\B$ on $\B$ such that for any $\ell\neq\ell'\in\B$, one has that
\[
	\omega_\B(\ell) < \omega_\B(\ell') \iff \omega(\ell) < \omega(\ell').
\]
This order will be called the \emph{order induced} by $\omega$ on $\B$.

\medskip

Let $\A_i=\omega^{-1}(\set{1,\dots,i})$, for $i\in\set{1,\dots,n}$. When $\omega$ is the order given by the indices, then $\A_i=\set{\ell_1,\dots,\ell_i}$. Consider the following ascending chain of (ordered) arrangements:
\begin{equation}\label{eqn:ordered_chain}
  \A_1=\set{\ell_1}\subsetneq \A_2 \subsetneq \cdots \subsetneq \A_i \subsetneq \cdots \subsetneq \A_n = \A,
\end{equation}
We denote by $\tau_i(\A,\omega)$ (alternatively, $\tau(\A,\omega,\ell_i)$ or $\tau_i$ depending on the context) the cardinality of the intersection $|\omega^{-1}(i) \cap \Sing(\A_{i-1})|$. Roughly speaking, $\tau_i$ counts the number of multiple points of $\A_i$ contained in the $i$th line. Note that we also have:
\begin{equation}\label{eqn:taui_asdiff}
  \tau_i = \sum_{P\in\C(\A_i)} (|P|-2)\, - \sum_{Q\in\C(\A_{i-1})} (|Q|-2).
\end{equation}

\begin{definition}\label{def:type}
	The \emph{type} of an ordered arrangement $(\A,\omega)$ is the $n$-tuple defined as
	\[   
	  \tau(\A,\omega) = (\tau_1 , \dots, \tau_n) \in \NN^n.
	\]
\end{definition}

It is clear that the type depends on the choice of an order. In fact, both the ascending chain \eqref{eqn:ordered_chain} and the type are of combinatorial nature. So they can be defined directly over an abstract line combinatorics $\C$. By an extension of notation, we will also denote $(\C)_1\subsetneq\cdots\subsetneq(\C)_n=\C$ and $\tau(\C,\omega)$.

\begin{example}
	Consider the arrangement $\A$ pictured in Figure~\ref{fig:type} endowed with the order $\omega$ induced by the indices. Its type is $\tau(\A,\omega)=(0,0,0,0,2,2,1,3)$.
\end{example}

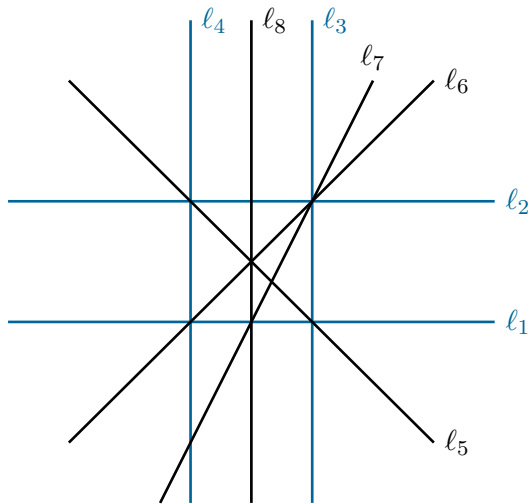
\begin{figure}[h]
	\begin{tikzpicture}[scale=0.8]
		\def\Pout{4}
		\tikzstyle{droites}=[line width=1pt]
		\draw[droites, blue!60!green] (-\Pout,-1) -- (\Pout,-1) node[right] {$\ell_1$};
		\draw[droites, blue!60!green] (-\Pout,1) -- (\Pout,1) node[right] {$\ell_2$};
		\draw[droites, blue!60!green] (1,-\Pout) -- (1,\Pout) node[right] {$\ell_3$};
		\draw[droites, blue!60!green] (-1,-\Pout) -- (-1,\Pout) node[right] {$\ell_4$};
		\draw[droites] (1-\Pout,\Pout-1) -- (\Pout-1,1-\Pout) node[right] {$\ell_5$};
		\draw[droites] (1-\Pout,1-\Pout) -- (\Pout-1,\Pout-1) node[right] {$\ell_6$};
		\draw[droites] (2.5-\Pout,-\Pout) -- (\Pout-2,\Pout-1) node[above] {$\ell_7$};
		\draw[droites] (0,-\Pout) -- (0,\Pout) node[right] {$\ell_{8}$};
	\end{tikzpicture}
	\caption{An arrangement with type $\tau(\A,\omega)=(0,0,0,0,2,2,1,3)$.\label{fig:type}}
\end{figure}

For two ordered arrangement $(\A,\omega)$ and $(\A',\omega')$ of $n$ and $n'$ lines respectively, and such that $\A\cap\A'=\emptyset$ (i.e. without common line, but not necessarily with generic intersection), one can define the operation $(\A\sqcup\A',\omega\oplus\omega')$ producing an ordered arrangements of $n+n'$ lines with order:
\[
  \begin{array}{rccl}
    \omega\oplus\omega' : & \A\sqcup\A'
		& \longrightarrow & \set{1,\ldots,n+n'}\\[0.3em]
		& \ell & \longmapsto &
			\begin{cases}
				\omega(\ell) & \text{if $\ell\in\A$},\\
				n+\omega'(\ell) & \text{if $\ell\in\A'$}.
			\end{cases}
  \end{array}
\]

\begin{remark}\label{rmk:tau_partition_independence}
	If $\A\in\Arr_n$ and $\A'\in\Arr_{n'}$ have no common lines, for any orders $\omega$ and $\omega'$ on $\A$ and $\A'$ respectively, one has that:
	\[
		\tau_i(\A\sqcup\A',\omega\oplus\omega') = \tau_i(\A,\omega)
		\quad\text{ and }\quad
		\tau_{n+j}(\A\sqcup\A',\omega\oplus\omega') \geq \tau_j(\A',\omega'),
	\]
	for any $i\in\{1,\dots,n\}$ and $j\in\{1,\dots,n'\}$. Furthermore, for any other order $\ol{\omega}$ on $\A$, and for any $j\in\{1,\dots,n'\}$, we have that:
	\[
		\tau_{n+j}(\A\sqcup\A',\omega\oplus\omega') = \tau_{n+j}(\A\sqcup\A',\ol{\omega}\oplus\omega').
	\]
\end{remark}

\begin{proposition}\label{propo:sumtau_dnaive}
	Let $(\A,\omega)$ be an ordered arrangement. The following equality holds:
	\begin{equation*}
		\sum_{i=1}^n (2-\tau_i) = 2|\A| - \sum_{P\in\C(\A)} (|P|-2).
	\end{equation*}
\end{proposition}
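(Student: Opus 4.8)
The plan is to exploit the telescoping structure already embedded in formula~\eqref{eqn:taui_asdiff}. First I would set $f(i) := \sum_{P\in\C(\A_i)}(|P|-2)$ for $i\in\{0,1,\dots,n\}$, with the convention $\A_0=\emptyset$ so that $\C(\A_0)=\emptyset$ and $f(0)=0$. With this notation, equation~\eqref{eqn:taui_asdiff} reads precisely $\tau_i = f(i)-f(i-1)$ for every $i\in\{1,\dots,n\}$ (the case $i=1$ being consistent, since $\A_1$ is a single line carrying no singular point, whence $\tau_1=0=f(1)-f(0)$).

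Next I would sum this identity over $i$ and observe that the right-hand side collapses:
\[
	\sum_{i=1}^n \tau_i = \sum_{i=1}^n \bigl(f(i)-f(i-1)\bigr) = f(n)-f(0) = f(n),
\]
using that $\A_n=\A$ gives $f(n)=\sum_{P\in\C(\A)}(|P|-2)$ and that $f(0)=0$. Finally I would split the target sum as $\sum_{i=1}^n(2-\tau_i) = 2n - \sum_{i=1}^n\tau_i$ and substitute, recalling $n=|\A|$, to obtain
\[
	\sum_{i=1}^n(2-\tau_i) = 2|\A| - \sum_{P\in\C(\A)}(|P|-2),
\]
which is the claimed equality.

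The argument is essentially a one-line telescoping computation, so there is no serious obstacle; the only point requiring a little care is the boundary behaviour of the chain~\eqref{eqn:ordered_chain}. Concretely, I would make sure that the convention at the bottom of the flag is handled correctly, namely that $\A_0$ (or equivalently $\A_1$, a lone line) contributes nothing to $f$, so that the telescoped difference $f(n)-f(0)$ really equals $\sum_{P\in\C(\A)}(|P|-2)$ without a stray boundary term. Once this bookkeeping is settled, the identity follows immediately, and notably the statement is manifestly independent of the chosen order $\omega$, as it must be since the right-hand side is combinatorial.
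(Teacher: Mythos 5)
Your proof is correct and takes essentially the same route as the paper: both arguments rest entirely on equation~\eqref{eqn:taui_asdiff}, which you sum telescopically while the paper unrolls the identical computation as an induction on the number of lines. The boundary bookkeeping you flag ($f(0)=0$, $\tau_1=0$) is exactly the base case of the paper's induction, so there is no gap.
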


\begin{proof}
	First, if $|\A|=1$, then both sides of the equality are obviously $0$.

	Assume that the equality holds for any ordered arrangement of $n$ lines. Let $\A$ be an ordered arrangement of $n+1$ lines and take $\A'=\A\setminus\{\ell_{n+1}\}$. 
	By the induction hypothesis and equation~\eqref{eqn:taui_asdiff}, one has:
	\begin{equation*}
		\sum_{i=1}^{n+1} (2-\tau_i)
		=(2-\tau_{n+1}) + 2|\A'| - \sum_{Q\in\C(\A')} (|Q|-2)
		= 2|\A| - \sum_{P\in\C(\A)} (|P|-2).
		\qedhere
	\end{equation*}
\end{proof}

The right-hand side of the equality in the preceding proposition can be seen as a preliminary attempt to establish a combinatorial formula for the dimension of the realization space $\R(\A)$, despite its naive nature. The part $2|\A|$ gives the number of required variables in the construction, while the sum over $P\in\C(\A)$ corresponds to the restriction imposed by the singular points. Since the action of $\PGL_3(\CC)$ can fix four points in generic position, this induces a reduction of $8$ to pass from the dimension of $\R(\A)$ to the dimension of $\M(\A)$.

\begin{definition}\label{def:naive}
	Let $\A$ be a line arrangement. The \emph{naive dimension} of the moduli space is:
	\[
		\naive \M(\A) = 2|\A| - 8 - \sum_{P\in\C(\A)} (|P|-2).
	\]
\end{definition}

\begin{remark}
	The naive dimension is fully determined by the combinatorics of $\A$, so we can define it on any abstract line combinatorics. Recall that in general, the naive dimension is not equal to the dimension of the moduli space, by Pappus' hexagon theorem.
\end{remark}

From~\cite[Remark~4.4]{DimIbaMac}, we have the following bound for the dimension of the moduli space.

\begin{proposition}\label{propo:naive_vs_dim}
	Let $\A$ be an arrangement. The following inequality holds:
	\[
		\naive \M(\A) \leq \dim_\CC \M(\A).
	\]
\end{proposition}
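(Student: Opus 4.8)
The plan is to realize $\M(\A)$ explicitly as a subvariety of $\big(\CC\wPP^2\big)^{n-4}$ through the fibration description~\eqref{eqn:moduli_space}, and then to bound its codimension by counting defining equations and invoking the affine dimension theorem (a consequence of Krull's principal ideal theorem). First I would dispose of the pencil cases: if $\A\in\X(n)$ or $\A\in\ol{\X}(n)$, a direct computation of $\sum_{P\in\C(\A)}(|P|-2)$ gives $\naive\M(\A)=n-6$, respectively $n-5$, and both are bounded above by $\max(0,n-3)$, respectively $\max(0,n-4)$, which are the dimensions supplied by Remark~\ref{rmk:moduli_pencils}. So the substance of the argument is the generic case.

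Assume then that $\A$ is neither in $\X(n)$ nor in $\ol{\X}(n)$, so that~\eqref{eqn:moduli_space} identifies $\M(\A)$ with a locally closed subvariety of the irreducible variety $X:=\big(\CC\wPP^2\big)^{n-4}$, of dimension $\dim_\CC X = 2(n-4) = 2n-8$. The conditions defining $\M(\A)$ inside $X$ split into the closed conditions $\Delta_{i,j,k}=0$ for the concurrent triples, and the open conditions $\ell_i\neq\ell_j$ together with $\Delta_{i,j,k}\neq 0$ for the non-concurrent triples. Since the open conditions merely restrict $\M(\A)$ to a Zariski-open subset, they do not lower its local dimension; hence it suffices to bound the codimension of the closed locus cut out by the vanishing determinants.

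The key step is to count these equations point by point. For a multiple point $P\in\C(\A)$, the concurrence of its $|P|$ lines is the determinantal (rank $\leq 2$) condition on the corresponding covectors. Fixing two lines $\ell_a,\ell_b\in P$, on the open set where they are distinct they meet in a unique point $p_P$, and the concurrence of $P$ becomes equivalent to requiring each of the remaining $|P|-2$ lines to pass through $p_P$, that is, to the $|P|-2$ equations $\Delta_{a,b,k}=0$ for $k\in P\setminus\{a,b\}$. Thus, locally, the concurrence locus of $P$ is cut out by exactly $|P|-2$ equations (double points contributing $0$), and summing over all singular points shows that, on affine charts covering $X$, the closed locus underlying $\M(\A)$ is defined by at most $\sum_{P\in\C(\A)}(|P|-2)$ equations. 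Applying the affine dimension theorem chart by chart, every irreducible component of $\M(\A)$ has dimension at least $2n-8-\sum_{P\in\C(\A)}(|P|-2)$, which is precisely $\naive\M(\A)$ by Definition~\ref{def:naive}. Since $\M(\A)$ is nonempty, containing the class of $\A$ itself, this yields $\naive\M(\A)\leq\dim_\CC\M(\A)$.

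The main obstacle is the local equation count per multiple point: one must verify that concurrence of $|P|$ lines imposes exactly $|P|-2$ conditions and no more, which rests on the codimension of the determinantal rank stratification, and one must check that this count persists when some lines of $P$ lie among the fixed lines $\ell_1,\dots,\ell_4$ (where the anchor point $p_P$ may already be determined). Once an appropriate anchor pair is chosen for each point the count is clean, and the passage from ``at most $c$ equations'' to the dimension bound is then formal.
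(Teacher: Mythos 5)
Your proof is correct, but it is worth pointing out that the paper does not actually prove Proposition~\ref{propo:naive_vs_dim} at all: it simply cites Remark~4.4 of Dimca--Ibadula--M\u{a}cinic. Your argument supplies the missing details, and it does so soundly: the pencil cases match Remark~\ref{rmk:moduli_pencils} (with $\naive\M(\A)=n-6$, resp.\ $n-5$, bounded by $\max(0,n-3)$, resp.\ $\max(0,n-4)$); in the generic case, the reduction of the concurrence conditions at a multiple point $P$ to the $|P|-2$ anchored equations $\Delta_{a,b,k}=0$ is valid on the open locus where the anchor lines are distinct (which contains $\M(\A)$), and Krull's height theorem applied chart by chart then gives that every irreducible component of the locus cut out by these $\sum_{P}(|P|-2)$ equations in $\big(\CC\wPP^2\big)^{n-4}$ has dimension at least $2n-8-\sum_{P}(|P|-2)=\naive\M(\A)$; since open conditions preserve the dimension of the surviving components and $\M(\A)\neq\emptyset$, the inequality follows. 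It is also worth noting that your route closely parallels machinery the paper itself develops later, only for a different purpose: the anchoring trick is exactly Remark~\ref{remark:set_conditions}, and the codimension count is the same ``classical argument on intersections of hypersurfaces'' used to prove Corollary~\ref{cor:dimension_bound} from Theorem~\ref{thm:moduli_embedding}, whose left-hand inequality is precisely this proposition. So what the citation buys the paper in brevity, your proof buys in self-containedness, and it makes explicit why the naive count is a genuine lower bound rather than a black box.
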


\section{Families of arrangements with connected moduli space}\label{sec:families}

In this section, we will study combinatorial classes of arrangements whose moduli space is connected. The first one is the notion of inductively connected arrangements introduced by Nazir and Yoshinaga in~\cite{NazirYoshinaga12}. It plays a crucial role in all the following.

\subsection{Inductively connected arrangement}\label{sec:inductively_connected}\mbox{}

The results of this section are reformulation and refinement of Nazir and Yoshinaga's~\cite{NazirYoshinaga12}. First, using the notion of the type of an arrangement introduced in Definition~\ref{def:type}, we can reformulate the definition of inductively connected arrangement.

\begin{definition}
	An arrangement $\A$ is \emph{inductively connected} if there exists an order $\omega$ on $\A$ such that $$\max(\tau(\A,\omega)) \leq 2.$$
\end{definition}

\begin{example}
	The arrangement $\A$ pictured in Figure~\ref{fig:ic} endowed with the order $\omega$ induced by the indices has type $\tau(\A,\omega)=(0,0,0,0,2,2,1)$. So it is an inductively connected arrangement.
\end{example}

\begin{figure}[!ht]
	\begin{tikzpicture}[scale=0.8]
		\def\Pout{4}
		\tikzstyle{droites}=[line width=1pt]
		\draw[droites, blue!60!green] (-\Pout,-1) -- (\Pout,-1) node[right] {$\ell_1$};
		\draw[droites, blue!60!green] (-\Pout,1) -- (\Pout,1) node[right] {$\ell_2$};
		\draw[droites, blue!60!green] (1,-\Pout) -- (1,\Pout) node[right] {$\ell_3$};
		\draw[droites, blue!60!green] (-1,-\Pout) -- (-1,\Pout) node[right] {$\ell_4$};
		\draw[droites] (1-\Pout,\Pout-1) -- (\Pout-1,1-\Pout) node[right] {$\ell_5$};
		\draw[droites] (1-\Pout,1-\Pout) -- (\Pout-1,\Pout-1) node[right] {$\ell_6$};
		\draw[droites] (2.5-\Pout,-\Pout) -- (\Pout-2,\Pout-1) node[above] {$\ell_7$};
	\end{tikzpicture}
	\caption{An inductively connected arrangement with type $\tau(\A,\omega)=(0,0,0,0,2,2,1)$.\label{fig:ic}}
\end{figure}
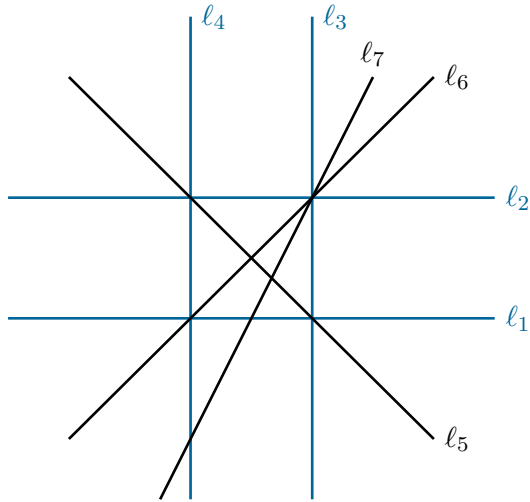

\begin{proposition}\label{propo:ic_square_basis}
	Let $\A\in\Arr_n$ be an inductively connected arrangement of $n\geq 4$ lines such that $\A$ is not in $\X(n)$ or $\ol{\X}(n)$. There exists an order $\omega$ on $\A$ verifying the following conditions:
	\begin{itemize}
		\item $\max(\tau(\A,\omega)) \leq 2$,
		\item the $4$th term $\A_4$ in the chain~\eqref{eqn:ordered_chain} is generic, i.e. $\A_4 \notin \X(4) \cup \ol{\X}(4)$.
	\end{itemize}
\end{proposition}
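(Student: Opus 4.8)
The plan is to start from any order $\omega_0$ witnessing inductive connectedness, so $\max(\tau(\A,\omega_0)) \leq 2$, and then modify it to a new order $\omega$ which keeps this property while additionally making $\A_4$ generic. Since $\A$ is not in $\X(n)$ or $\ol{\X}(n)$, it contains four lines in general position; call them $m_1,m_2,m_3,m_4$. The idea is to place these four lines first in the order, so that $\A_4 = \{m_1,m_2,m_3,m_4\}$ is generic by construction, and then append the remaining lines in a way that does not create any $\tau_i \geq 3$.

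First I would handle the four chosen generic lines: since they are in general position, $\tau_1 = \tau_2 = \tau_3 = \tau_4 = 0$ for any ordering of them among themselves, so the first four terms of the type cause no trouble and the second bullet is satisfied. The delicate part is appending $\ell_5,\dots,\ell_n$ (the remaining lines of $\A$ in some order) so that each new line meets the multiple points already constructed in at most two points. The natural approach is to show that the original order $\omega_0$ can be \emph{reshuffled} so that the four generic lines are moved to the front without increasing any $\tau_i$ beyond $2$. Here I would rely on a local exchange argument: I would argue that one can reorder the lines by a sequence of adjacent transpositions, and control how $\tau$ changes under such a swap. Specifically, swapping two consecutive lines $\ell_i,\ell_{i+1}$ only redistributes the multiple points lying on these two lines between positions $i$ and $i+1$, while leaving all other $\tau_j$ untouched (as in the spirit of Remark~\ref{rmk:tau_partition_independence}). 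By a careful bookkeeping of which multiple points "migrate" during such swaps, I would show that the four generic lines can be bubbled to the front while the bound $\max(\tau) \leq 2$ is preserved throughout.

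The hard part will be verifying that pulling the four generic lines to the front never forces some later line to acquire a third multiple point at its insertion step. When a generic line $m_k$ is moved earlier, the multiple points it participated in are now "seen" sooner, which could in principle raise the $\tau$-value of a line that previously only saw two of them. The key observation I expect to need is that each of the four generic lines $m_1,\dots,m_4$ was chosen precisely so that the quadruple is in general position, meaning any two of them meet in a \emph{double} point of $\A_4$; this limits how many multiple points they can carry forward and thus bounds the damage of the reshuffle. I would combine this with the additive formula~\eqref{eqn:taui_asdiff} for $\tau_i$ as a difference of $\sum_{P}(|P|-2)$ between consecutive steps, which makes the total "budget" of multiple-point incidences order-independent and lets me argue that no single line can be forced above the threshold $2$ after the reshuffle.

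Alternatively, if the exchange argument proves too delicate to push through cleanly, I would fall back on a direct inductive construction: take $\A_4$ generic as the base case, and at each step $i \geq 5$ select as the next line any line of $\A \setminus \A_{i-1}$ whose $\tau$-value with respect to $\A_{i-1}$ is at most $2$, proving that such a choice always exists by combining the inductive connectedness of $\A$ with the flexibility afforded by having started from a generic quadruple. In either approach, the crux is the same: controlling the number of already-constructed multiple points that a candidate line passes through, and showing that the generic starting configuration guarantees a valid greedy continuation.
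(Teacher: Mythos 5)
Your overall plan---place a generic quadruple first, then extend---is the right shape, but both mechanisms you propose for carrying it out contain a genuine gap: they treat the generic quadruple as \emph{arbitrary}, and that is false. Concretely, consider the arrangement $\A=\{a_1,a_2,b_1,b_2,c_1,c_2,\ell\}$ in which $\ell$ carries three triple points $A=a_1\cap a_2$, $B=b_1\cap b_2$, $C=c_1\cap c_2$ and all other intersections are double (realizable: $\ell\colon y=0$, $A=(0,0)$, $B=(1,0)$, $C=(2,0)$, the other six lines having pairwise distinct generic slopes). The order $(a_1,a_2,b_1,b_2,\ell,c_1,c_2)$ has type $(0,0,0,0,2,0,1)$, so $\A$ is inductively connected, and $Q=\{a_1,b_1,c_1,c_2\}$ is a quadruple in general position. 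Your first mechanism (bubble $Q$ to the front, keeping the remaining lines in their original relative order) produces the order $(a_1,b_1,c_1,c_2,a_2,b_2,\ell)$, in which the last line has $\tau$-value $3$, because $A$, $B$ and $C$ are all singular points of the first six lines; no bookkeeping of adjacent transpositions can avoid this, since the failure is a property of the terminal order itself. Your fallback greedy mechanism fails on the same example: after $Q$, choosing $a_2$ (with $\tau=0$) and then $b_2$ (with $\tau=0$) is legal, after which the only remaining line is $\ell$ with $\tau$-value $3$; hence the claim that ``such a choice always exists'' is false. (A valid completion of $Q$ does exist here, namely $a_2,\ell,b_2$, but nothing in your argument produces it; and your observation that the four chosen lines meet pairwise in double points of $\A_4$ bounds nothing, since those lines may carry many multiple points of $\A$.)

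The missing idea---and the way the paper's proof actually goes---is to choose the quadruple \emph{inside an initial segment of the given order} $\omega_0$ and to reorder only that segment. If $\A_4$ is not generic, it is a pencil or near-pencil with a unique multiple point $P$; taking $k$ minimal such that $\A_k$ contains two lines not passing through $P$, the segment $\A_k$ consists of a pencil centered at $P$ together with exactly two further lines, and it contains a generic quadruple $\B$ (two pencil lines avoiding the intersection of the two non-pencil lines, plus those two lines). Reordering $\A_k$ internally so that $\B$ comes first keeps $\tau\leq 2$ inside $\A_k$, because every other line of $\A_k$ passes through $P$ and at most one further singular point; and because the \emph{set} of predecessors of every line outside $\A_k$ is unchanged, Remark~\ref{rmk:tau_partition_independence} guarantees that all lines of $\A\setminus\A_k$ retain their original $\tau$-values. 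Your reshuffle changes the predecessor sets of intermediate lines, which is exactly what the counterexample exploits; any repair of your argument would have to specify the quadruple and the completion with this kind of care, and that specification is the real content of the proposition.
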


\begin{proof}
	Since $\A$ is inductively connected, there exists an order $\omega_0$ such that $\max(\tau(\A,\omega_0))\leq 2$. We assume that $\A_4$ is not generic, i.e. $\A_4$ is either $\X(4)$ or $\ol{\X}(4)$ (otherwise, we choose $\omega=\omega_0$), and also that $n\geq 5$. We denote by $P$ the unique multiple point of $\A_4$.

	Let $k$ be the smallest integer of $\{5,\dots,n\}$ such that $\A_k$ is formed by lines in the pencil centered at $P$ and two lines not passing through $P$. Note that $\A_k$ contains a generic subarrangement $\B$ of $4$ lines. Let $\omega_k$ be any order on $\A_k$ such that $\omega_k(\B) = \{1,2,3,4\}$, and let $\omega_0'$ be the order induced by $\omega_0$ on $\A\setminus\A_k$. By Remark~\ref{rmk:tau_partition_independence}, the order $\omega=\omega_k \oplus \omega_0'$ on $\A=\A_k\sqcup (\A\setminus\A_k)$ verifies the required conditions and the result holds.
\end{proof}

\begin{proposition}\label{propo:ic}
   If $\A$ is an inductively connected arrangement such that $\A\notin\X(n)\cup\ol{\X}(n)$, then $\M(\A)$ is isomorphic to a Zariski open subset of a complex space of dimension $\naive \M(\A)$. As a consequence, the moduli space $\M(\A)$ is connected.
\end{proposition}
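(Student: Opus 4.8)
The plan is to realise $\M(\A)$ explicitly as the total space of an iterated ``line-by-line'' construction in which each new line $\ell_i$ contributes exactly $2-\tau_i$ free parameters, and to read off from this construction both the ambient affine space and the open conditions cutting out $\M(\A)$. First I would invoke Proposition~\ref{propo:ic_square_basis} to fix an order $\omega$ with $\max(\tau(\A,\omega))\leq 2$ and with $\A_4$ generic. After normalising $\A_4$ to the standard arrangement $S_0$ as in~\eqref{eqn:moduli_space}, the moduli space is the locus of $(\ell_5,\dots,\ell_n)\in\big(\CC\wPP^2\big)^{n-4}$ realising $\C(\A)$, and I would build it along the chain $\A_4\subsetneq\cdots\subsetneq\A_n=\A$, analysing at step $i$ the admissible positions of $\ell_i$ once $\ell_5,\dots,\ell_{i-1}$ have been chosen. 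Combinatorially, $\ell_i$ must pass through the $\tau_i$ singular points of $\A_{i-1}$ prescribed by $\C$, avoid all the other singular points, and be distinct from the previous lines.

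The three cases $\tau_i\in\{0,1,2\}$ would be treated as follows. If $\tau_i=0$, then $\ell_i$ ranges over $\CC\wPP^2$ minus the finitely many pencils through the singular points of $\A_{i-1}$ and minus the points $\ell_1,\dots,\ell_{i-1}$; fixing the constant point $P_0=\ell_1\cap\ell_2$ (constant because $\ell_1,\ell_2\in S_0$) and working in the affine chart $\CC\wPP^2\setminus P_0^{\vee}\cong\CC^2$, where $P_0^{\vee}$ is the line of lines through $P_0$, this is a Zariski-open subset of $\CC^2$, i.e.\ two free parameters. If $\tau_i=1$, writing the prescribed point as $\ell_a\cap\ell_b$ with $a,b<i$, every line through it is a combination of $\ell_a,\ell_b$ in dual coordinates, so I would set $\ell_i=\ell_a+t_i\,\ell_b$ and let $t_i$ range over $\CC$ minus finitely many values (those hitting a forbidden point or a previous line): one free parameter, open in $\CC$. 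If $\tau_i=2$, with prescribed points $\ell_a\cap\ell_b$ and $\ell_c\cap\ell_d$, the line is the cross product $\ell_i=(\ell_a\times\ell_b)\times(\ell_c\times\ell_d)$, a fixed rational function of the previous data: no new parameter, while the remaining constraints merely restrict the base to a Zariski-open subset. Collecting the $2-\tau_i$ coordinates introduced at each step gives a coordinate space $\CC^N$, and the explicit formulas above (polynomial in the chart variables) assemble into a morphism $\Phi$ from a Zariski-open set $U\subseteq\CC^N$ onto $\M(\A)$; since the coordinates can be recovered rationally from $(\ell_5,\dots,\ell_n)$, I expect $\Phi$ to be a biregular isomorphism $U\xrightarrow{\sim}\M(\A)$.

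For the dimension, since $\A_4=S_0$ is generic its first four lines give $\tau_1=\cdots=\tau_4=0$, so by Proposition~\ref{propo:sumtau_dnaive},
\[
N=\sum_{i=5}^{n}(2-\tau_i)=\sum_{i=1}^{n}(2-\tau_i)-8=2|\A|-\sum_{P\in\C(\A)}(|P|-2)-8=\naive\M(\A).
\]
Finally, $\M(\A)\cong U$ is a nonempty Zariski-open subset of the irreducible variety $\CC^N$ (nonempty because $\A$ itself realises $\C(\A)$), hence irreducible, and in particular connected.

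The main obstacle I anticipate is the careful verification that $\Phi$ is a genuine isomorphism onto a Zariski-open subset of $\CC^N$: one must check that the pencil parametrisation $\ell_i=\ell_a+t_i\,\ell_b$ and the determined-line formula are invertible morphisms, and that every imposed condition (distinctness, avoidance of forbidden concurrences, validity of the determined lines) is Zariski-open, so that no closed constraint is hidden. This is exactly where the hypothesis $\max(\tau(\A,\omega))\leq 2$ is indispensable: were some $\tau_i\geq 3$, the line $\ell_i$ would be forced through three or more points, an overdetermined and genuinely closed (non-generic) condition that both breaks the affine-space bookkeeping and can disconnect the construction --- precisely the Pappus-type phenomenon excluded here.
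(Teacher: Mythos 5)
Your proof is correct and follows essentially the same route as the paper's: fixing $\A_4$ generic via Proposition~\ref{propo:ic_square_basis}, inducting line by line with the case analysis $\tau_i\in\{0,1,2\}$ that refines the $\CC\PP^2$- and $\CC\PP^1$-fibrations to $\CC^2$- and $\CC$-fibrations, and computing the dimension with Proposition~\ref{propo:sumtau_dnaive}. The only difference is that you re-derive the fibration structure through an explicit parametrization rather than citing~\cite[Lemma~3.2]{NazirYoshinaga12}, which is precisely the construction the paper carries out anyway in Section~\ref{sec:parametrization}.
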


\begin{proof}
	In the proof of~\cite[Lemma~3.2]{NazirYoshinaga12},
	the authors show that $\M(\A_{i+1})$ is isomorphic to either a $\CC\PP^1$-fibration or a trivial $\CC\PP^2$-fibration over $\M(\A_i)$, depending on the values of $\tau_i$.
	
	When $\tau_{i}=1$, the $\CC\PP^1$-fibration describes the choice of the new line $\ell_i$ in a pencil, which is supported by a singular point of $\A_{i-1}$. Given that there is already a line in $\A_{i-1}$ which belongs to this pencil and since $\ell_i$ should be different from this one, we can thus describe the choice of $\ell_i$ as a $\CC$-fibration over $\M(\A_{i-1})$.
	
	Similarly, when $\tau_i=0$ the $\CC\PP^2$-fibration describes the choice of a generic line $\ell_i$. Fix $P\in \Sing(\A_{i-1})$. Since $\ell_i$ should not pass through $P$ and thus it does not belong to the pencil supported on $P$, one can describe the choice of $\ell_i$ as a $\CC^2$-fibration.

	According to Proposition~\ref{propo:ic_square_basis}, we can assume that $\A_4$ is generic, and it can be completely fixed by the action of $\PGL_3$, i.e.~$\M(\A_4)=\set{\A_4}$. By induction, we obtain that $\M(\A)$ is a Zariski open subset of the complex space of dimension $\sum_{i=5}^n (2-\tau_i)$ and by Proposition~\ref{propo:sumtau_dnaive} the result holds.
\end{proof}

In fact, an explicit parametrization of $\M(\A)$ as an open Zariski subset of an affine complex space is provided in Section~\ref{sec:parametrization} for any arrangement $\A$.

\begin{corollary}\label{cor:ic_dim}
  If $\A\in\Arr_n$ is an inductively connected arrangement such that $\A\notin\X(n) \cup \ol{\X}(n)$, then
  \[
    \dim_\CC \M(\A) = \sum_{i=5}^n (2-\tau_i) = \naive \M(\A).
  \]
\end{corollary}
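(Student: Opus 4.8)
The plan is to observe that this corollary merely assembles results already in hand: the dimension statement comes from Proposition~\ref{propo:ic}, and the two equalities are then pure bookkeeping with the type. I would treat the two equalities separately.

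For the first equality $\dim_\CC\M(\A)=\sum_{i=5}^n(2-\tau_i)$, I would invoke the proof of Proposition~\ref{propo:ic}, in which $\M(\A)$ is exhibited as a Zariski open subset of a complex affine space of dimension $\sum_{i=5}^n(2-\tau_i)$. Since $\A$ realizes its own combinatorics, this open subset is nonempty, and a nonempty Zariski open subset of the (irreducible) affine space has the same dimension as the ambient space. Hence $\dim_\CC\M(\A)=\sum_{i=5}^n(2-\tau_i)$.

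For the second equality $\sum_{i=5}^n(2-\tau_i)=\naive\M(\A)$, I would fix the order supplied by Proposition~\ref{propo:ic_square_basis}, for which $\A_4$ is generic. A generic arrangement of four lines has only double points, so formula~\eqref{eqn:taui_asdiff} gives $\tau_1=\tau_2=\tau_3=\tau_4=0$, whence $\sum_{i=1}^4(2-\tau_i)=8$. Subtracting this from the total computed in Proposition~\ref{propo:sumtau_dnaive} yields
\[
	\sum_{i=5}^n(2-\tau_i)=\Big(2|\A|-\sum_{P\in\C(\A)}(|P|-2)\Big)-8,
\]
which is exactly $\naive\M(\A)$ by Definition~\ref{def:naive}.

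There is no genuine obstacle here; the only points requiring care are the transfer of the dimension across the isomorphism of Proposition~\ref{propo:ic} and the a priori dependence of $\sum_{i=5}^n(2-\tau_i)$ on the chosen order. The latter is resolved by the computation above, which shows that this sum equals the order-independent quantity $\naive\M(\A)$ as soon as $\A_4$ is generic; using the order of Proposition~\ref{propo:ic_square_basis} simultaneously guarantees $\max(\tau(\A,\omega))\le 2$ (so that Proposition~\ref{propo:ic} applies) and that $\A_4$ is generic (so that the type bookkeeping closes up), which is precisely why that proposition was set up with both conditions.
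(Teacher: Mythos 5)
Your proposal is correct and follows essentially the same route as the paper: the paper's proof of Proposition~\ref{propo:ic} exhibits $\M(\A)$ as a (nonempty) Zariski open subset of an affine space of dimension $\sum_{i=5}^n(2-\tau_i)$, giving the first equality, and invokes Proposition~\ref{propo:sumtau_dnaive} together with the genericity of $\A_4$ from Proposition~\ref{propo:ic_square_basis} (so that $\tau_1=\cdots=\tau_4=0$ and $\sum_{i=1}^4(2-\tau_i)=8$) for the second. Your extra care about nonemptiness and the a priori order-dependence of the sum is a sound elaboration of the same argument, not a different one.
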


\begin{remark}
	In the same spirit as~\cite[Lemma~3.2]{NazirYoshinaga12}, let $(\A,\omega)$ and $(\A',\omega')$ be two ordered arrangements with $n$ and $n'$ lines respectively and such that $\tau_i(\A \sqcup \A',\omega\oplus\omega') \leq 2$ for all $i\in\{n+1,\dots,n+n'\}$. If $\M(\A)$ is irreducible, then $\M(\A\sqcup\A')$ is irreducible thus connected and it has dimension $$\dim_\CC \M(\A\sqcup\A') = \dim_\CC \M(\A) + \sum_{i=n+1}^{n+n'} (2-\tau_i(\A\sqcup\A',\omega\oplus\omega')).$$ Note that the reciprocal is not true in general, as it is shown in the following example.
\end{remark}

\begin{example}\label{ex:not_dense}
	In~\cite[\textsection~3.2]{GueViu:configurations} the authors present a line combinatorics $\C$ of $13$ lines with a moduli space with two irreducible components of dimension $1$. These components are geometrically characterized, e.g.~there are particular multiple points $P_1$, $P_2$, and $P_3$ which are collinear in the first component but they are not in the second one (see~\cite[Prop.~4.6]{GueViu:configurations}). Consider the combinatorics $\C'$ constructed on $\C$ by the addition of a line $\ell_{14}$ passing through $P_1$ and $P_2$ but avoiding $P_3$. By construction, one has that $\tau_{14}(\C') = 2$ and $\M(\C')$ is irreducible.
\end{example}

\subsection{Nice, simple and generalized simple arrangements}\label{sec:nice_simple}\mbox{}

In~\cite{JiaYau}, Jiang and Yau defined the combinatorial class of \emph{nice} arrangements, which was generalized by Wang and Yau in~\cite{WangYau} as the classes of \emph{simple} and \emph{generalized simple} arrangements. In the introduction of~\cite{NazirYoshinaga12}, Nazir and Yoshinaga ask about the relation between these combinatorial classes and the class of inductively connected arrangements. The purpose of this section is to prove that any generalized simple arrangement is actually inductively connected. First, let us recall the definition of generalized simple arrangements.

For any arrangement $\A$, we define $G_\A$ as the graph whose vertices $v_P$ are in one-to-one correspondence with the multiple points $P$ of $\A$ (i.e. the singular points whose multiplicity is at least $3$), and two vertices $v_P$ and $v_Q$ are joined by an edge if and only if the two corresponding points $P$ and $Q$ lie on the same line of $\A$. Associated with this graph, one define the following notions:
\begin{itemize}
  \item A \emph{star} centered in $v_P$, and denoted  $\St(v_P)$, is the subgraph of $G_\A$ generated by $v_P$ and any neighbor vertex of $v_P$. The open star $\overset{\circ}{\St}(v_P)$ is the complex composed of $v_P$ and any adjacent edge (without the neighbor vertices of $v_P$).

  \item A \emph{reduced circle} of $G_\A$ is a tuple of $(v_1,\dots,v_k)$ of vertices of $G_\A$, such that for all $i\in\{1,\dots,k\}$, one has that $(v_i,v_{i+1})$ is an edge of $G_\A$ and $v_{i-1}\neq v_{i+1}$ (the indices are considered modulo~$k$).

  \item A \emph{generalized free net} based on a tuple $(B_0,\dots,B_m)$ of reduced circles of $G_\A$ (where $B_0$ can also be a single vertex), is the maximal subgraph $\Net(B_0,\dots,B_m)$ of $G_\A$ whose vertices are at a distance of at most $1$ from at least one of the circles $B_i$, and such that
	\begin{enumerate}
		\renewcommand{\labelenumi}{(N\arabic{enumi})}
		\item for all $i\in\{0,\dots,m\}$, two vertices $v$ and $w$ of $B_i$ are connected by an edge if and only if they are adjacent in $B_i$,
		\item for all $i,j\in\{0,\dots,m-2\}$ and two vertices $v\in B_i$ and $w\in B_j$, it does not exist $z \in B_k$ with $k>\max(i,j)$ which verifies that $(v,z)$ and $(w,z)$ are both edges in $\Net(B_0,\dots,B_m)$,
		\item for all pair of vertices $v,w\in \Net(B_0,\dots,B_m)$, it does not exist $z \in G_\A \setminus \Net(B_0,\dots,B_m)$ which verifies that $(v,z)$ and $(w,z)$ are both edges in $G_\A$,
		\item for all $i\in\{1,\dots,n\}$, there exists a vertex $v\in B_i$ which is not connected by an edge to any reduced circle $B_j$ with $j<i$. The vertex $v$ is named the \emph{free vertex of the circle $B_i$}.
	\end{enumerate}
	The open net $\overset{\circ}{\Net}(B_0,\dots,B_m)$ is the complex formed by the vertices of the $B_i$ and all the edges of $\Net(B_0,\dots,B_m)$ (i.e.~all the end vertices of the net are removed).
\end{itemize}

\begin{definition}\label{defn:generalized_simple}
	An arrangement $\A$ is \emph{generalized simple} if there are stars $\St(v_1),\dots,\St(v_k)$ and generalized free nets $\Net_1,\dots,\Net_l$ which are pairwise disjoint in $G_\A$ and such that the graph
	\[
		G'_\A=G_\A\setminus \left\{  \overset{\circ}{\St}(v_1) , \dots, \overset{\circ}{\St}(v_k), \overset{\circ}{\Net}_1,\dots, \overset{\circ}{\Net}_l \right\}
	\]
	is a forest.
	When $l=0$, the arrangement $\A$ is \emph{nice}; and it is \emph{simple} when for all nets have $m=2$.
\end{definition}

\begin{theorem}\label{thm:simple_arrangement}
	If $\A$ is a generalized simple arrangement, then $\A$ is inductively connected.
\end{theorem}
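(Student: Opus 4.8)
The plan is to exhibit an order $\omega$ on $\A$ with $\max(\tau(\A,\omega))\le 2$, reading it off the decomposition of $G_\A$ into the pairwise disjoint open stars, the pairwise disjoint open nets, and the residual forest $G'_\A$. The governing fact is that $\tau_i$ counts the singular points of $\A_{i-1}$ lying on the new line $\ell_i$ (equivalently~\eqref{eqn:taui_asdiff}), and that a multiple point $P$ becomes singular only once its second incident line has been placed. It therefore suffices to order the lines so that each line, when inserted, carries at most two multiple points that have already received two earlier lines.

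First I would order the lines inside each of the three kinds of blocks. For a star I would insert the whole pencil through its center $P$ consecutively: the first two lines create $P$ with $\tau=0$, and since each neighbour of $P$ lies on a single pencil line it is never yet singular, so every further pencil line sees only $P$ and has $\tau\le 1$. For the residual forest I would root each tree and insert its edge-lines so that each joins an already-present vertex to one not yet created, again keeping $\tau$ small; the pendant lines (carrying a single multiple point) and the generic lines (carrying none) are inserted with $\tau\le 1$ and pose no difficulty. For a net based on $(B_0,\dots,B_m)$ I would take the circles in increasing index order and enter each $B_i$ at its free vertex given by~(N4); condition~(N1) forbids chords, so going around the cycle every edge but the last meets at most one already-singular endpoint while the closing edge meets exactly its two endpoints, keeping $\tau\le 2$.

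These blocks must then be concatenated, and this is where the main obstacle lies. Using the disjoint-union operation $\oplus$ and Remark~\ref{rmk:tau_partition_independence}, appending a block after the already-built arrangement can only raise the $\tau$ of its lines: a multiple point shared by two blocks may already carry an inherited incident line, so it turns singular one step sooner than it would in isolation. Bounding this inflation is precisely the purpose of the generalized-simple hypothesis. The pairwise disjointness of the stars and nets together with conditions~(N1)--(N3) limits how many edges a vertex may inherit from earlier blocks, while the free vertex of each circle~(N4) provides a chord-free entry point carrying no inherited edge. I expect the technical heart of the argument to be the verification that, under these axioms, every closing edge of a circle and every edge touching a vertex shared across blocks still sees at most two previously created singular points.

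Finally I would fix a concatenation order of the blocks for which the inherited edges stay under control, and conclude that every line of $\A$ is inserted with $\tau\le 2$, so that $\max(\tau(\A,\omega))\le 2$ and $\A$ is inductively connected. Note that an edgeless $G_\A$ is a (trivial) forest, so pencils and the other low-complexity arrangements are already covered by the forest block.
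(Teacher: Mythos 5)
Your plan correctly isolates where the difficulty lies, but it does not overcome it: the whole content of the theorem sits in the concatenation step, and there you only write that you ``expect'' the verification to go through and that you ``would fix'' a suitable order of the blocks --- no such order is exhibited, and no argument is given that one exists. The problem is not minor bookkeeping. First, the blocks do not even partition the lines of $\A$: a single line $\ell$ can support multiple points belonging to different blocks. For instance, $\ell$ may pass through a star center $P$ and through two neighbours $Q,R$ of $P$, while the edge $(v_Q,v_R)$ (which is this same line $\ell$) lies in the forest $G'_\A$; this is perfectly compatible with Definition~\ref{defn:generalized_simple}. Whichever block you assign $\ell$ to, if the lines through $Q$ and through $R$ from the other block are inserted first, then $Q$ and $R$ are already singular when $\ell$ arrives, and once two pencil lines through $P$ are also present, $\ell$ sees three singular points and gets $\tau(\ell)=3$. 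So the ``inflation'' you mention is not merely a technical nuisance to be checked: there are concrete generalized simple configurations where a naive block-by-block insertion fails, and controlling this is exactly the missing proof.

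For comparison, the paper avoids the issue by running the construction in the opposite direction: instead of concatenating blocks bottom-up, it repeatedly peels off the \emph{whole pencil} $\A_P=\{\ell\in\A\mid P\in\ell\}$ of a vertex $v_P$ whose current valence in the graph is at most $2$, placing those lines at the end of the order. The key point (Lemma~\ref{lem:simple}) is that the valence hypothesis yields an order $\omega_P$ on $\A_P$ with $\tau\leq 2$ \emph{for any order whatsoever on $\A\setminus\A_P$}; this uniformity is precisely what makes interference between blocks irrelevant, because valence $\leq 2$ bounds once and for all how many multiple points a line through $P$ can carry. Lemma~\ref{lem:ic_induction} then reduces $\A$ to $\A\setminus\A_P$, and the generalized simple axioms are used only to guarantee that a valence-$\leq 2$ vertex always exists at each stage: endpoints of the forest $G'_\A$ (valence $\leq 2$ in $G_\A$ by disjointness and (N3)), then leaves and centers of the stars, then the free vertices of the circles $B_m,B_{m-1},\dots$ of each net (via (N1), (N2), (N4)). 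If you want to salvage your approach, note that it is the time-reversal of this peeling, and the statement you would need to prove for the concatenation step is essentially Lemma~\ref{lem:simple}: a per-vertex bound on $\tau$ that is valid independently of everything inserted before.
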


In order to prove our result, the two following technical lemmas are required. They allow to remove all the lines which pass through a multiple point with a corresponding vertex of valence at most $2$ in the graph $G_\A$.

\begin{lemma}\label{lem:simple}
	Let $\A\in\Arr_n$ be an arrangement and $v_P$ a vertex of $G_\A$ of valence $m \leq 2$, and let $\A_P=\{L\in\A \mid P\in L\}$. There exists an order $\omega_P$ on $\A_P$ which verifies that for any order $\omega$ on $\A\setminus\A_P$, and for all $\ell \in \A_P$, we have that $\tau(\A,\omega \oplus \omega_P,\ell)\leq 2$.
\end{lemma}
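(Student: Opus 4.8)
The plan is to append all lines of $\A_P$ at the very end of the order and to use the valence bound $m\le 2$ to control how many multiple points other than $P$ can lie on these lines. Write $m_{(1)},\dots,m_{(s)}$ for the lines of $\A_P$ listed in the order $\omega_P$ (to be chosen), where $s=|P|$. First I would record a reduction: since every line of $\A$ through $P$ belongs to $\A_P$, no line of $\A\setminus\A_P$ passes through $P$, and in the composite order $\omega\oplus\omega_P$ every line of $\A\setminus\A_P$ precedes every line of $\A_P$. Hence, for $\ell=m_{(t)}$ the preceding arrangement is exactly $\A'=(\A\setminus\A_P)\cup\{m_{(1)},\dots,m_{(t-1)}\}$, which is independent of the way $\omega$ orders $\A\setminus\A_P$; this is what will let a single $\omega_P$ work for every $\omega$.

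The key combinatorial observation is that every $Q\in\Sing(\A')$ lying on $\ell$ is a multiple point of $\A$, and if $Q\ne P$ it is a neighbour of $v_P$ in $G_\A$. Indeed $Q$ carries at least two lines of $\A'$, so together with $\ell\notin\A'$ it carries at least three lines of $\A$; and since $Q$ and $P$ both lie on $\ell\in\A_P$, the vertices $v_Q$ and $v_P$ are joined by an edge. As $v_P$ has valence $m\le 2$, there are at most two such neighbours, say $Q_1,\dots,Q_r$ with $r\le m\le 2$, and each $Q_j$ lies on the unique line $PQ_j\in\A_P$. Setting $\varepsilon=1$ if $P\in\Sing(\A')$ and $\varepsilon=0$ otherwise, this yields
\[
\tau(\A,\omega\oplus\omega_P,\ell)\ \le\ \varepsilon+\#\{\,j:Q_j\in\ell\,\},
\]
where $\varepsilon=1$ exactly when at least two lines of $\A_P$ precede $\ell$, i.e.\ when $t\ge 3$.

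This makes the obstruction transparent and dictates the choice of $\omega_P$: the right-hand side can reach $3$ only if $P$ is already singular (so $t\ge3$) while $\ell$ still meets two neighbours $Q_1,Q_2$; but a line through $P$, $Q_1$ and $Q_2$ forces these three points to be collinear, so at most one line $m^\ast=PQ_1=PQ_2$ of $\A_P$ can contain two neighbours. I would therefore define $\omega_P$ by listing first the (at most two) lines $PQ_j$ through the neighbours, and then the remaining lines of $\A_P$ in any order. A short case analysis — valence $0$ or $1$; valence $2$ with $P,Q_1,Q_2$ non-collinear (the two special lines occupy positions $1$ and $2$, each meeting a single neighbour while $P$ is not yet singular); and the collinear case, where $m^\ast$ is placed first, so that $P$ is still non-singular when both its neighbours are encountered — then gives $\tau(\A,\omega\oplus\omega_P,\ell)\le 2$ in every case. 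The main point to get right is precisely this collinear configuration: placing the doubly-special line $m^\ast$ before $P$ has accumulated multiplicity $\ge 2$ is exactly what keeps the bound from slipping to $3$.
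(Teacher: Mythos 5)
Your proof is correct and takes essentially the same route as the paper's: both arguments rest on identifying every singular point of the preceding arrangement lying on a line of $\A_P$ (other than $P$ itself) with a neighbour of $v_P$ in $G_\A$, and both choose $\omega_P$ so that the line(s) through those neighbours come first, with the same case split on the valence and on whether the two neighbours are collinear with $P$ (the paper's ``two edges corresponding to the same line'' case). Your unified bound $\tau \le \varepsilon + \#\{\,j : Q_j \in \ell\,\}$ is a clean repackaging of the paper's direct four-case analysis, not a different method.
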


\begin{proof}
	Let $\omega$ be a fixed order on $\A\setminus\A_P$. Note that if $\omega_P$ is an order of $\A_P$ then $\omega\oplus\omega_P$ is an order on $\A$. We study each case by the possible values of the valence:

	If $m=0$, then $v_P$ is an isolated vertex of $G_\A$. That is equivalent to say that any line of $\A_P$ has only $P$ as multiple point. So for any order $\omega_P$ on $\A_P$, one has $\tau(\A,\omega\oplus\omega_P,\ell) \leq 1$, for all $\ell\in\A_P$.

	If $m=1$, we denote by $\ell_P$ the line corresponding to the unique edge of $G_\A$ which ends at $v_P$. Remark that $\ell_P$ contains exactly two multiple points, since otherwise $m$ should be at least $2$. As in the previous case, any line of $\A_P \setminus \{\ell_P\}$ contains only $P$ as multiple point. Consider any order $\omega_P$ on $\A_P$ such that $\omega_P(\ell_P)=1$. For any line $\ell\in\A_P\setminus\{\ell_P\}$, we have that $\tau(\A,\omega\oplus\omega_P,\ell) \leq 1$. In $\A\setminus\{\ell\in\A_P \mid \ell\neq \ell_P\}$, the point $P$ is no longer a multiple point (neither a singular point). So, in the previous arrangement, $\ell_P$ contains a unique multiple point. Then we have $\tau(\A,\omega\oplus\omega_P,\ell_P)=1$.

	If $m=2$ and the two edges which join $v_P$ correspond to the same line $\ell_P$, then $\ell_P$ contains at most $3$ multiple points. We consider any order $\omega_P$ such that $\omega_P(\ell_P)=1$, then end of the proof is similar to the case $m=1$. The only difference is that in $\A\setminus\{\ell\in\A_P \mid \ell\neq \ell_P\}$, the line $\ell_P$ contains at most $2$ multiple points and thus $\tau(\A,\omega\oplus\omega_P,\ell_P)\leq 2$.

	If $m=2$ and the two edges which join $v_P$ correspond to two different lines $\ell_P$ and $\ell'_P$, then each line contains exactly two multiple points. So any line in $\A_P$ contains at most $2$ multiple points. This implies that for any order $\omega_P$ on $\A_P$, one has $\tau(\A,\omega\oplus\omega_P,\ell) \leq 2$, for all $\ell\in\A_P$.
\end{proof}

\begin{lemma}\label{lem:ic_induction}
	Let $\A$ be a line arrangement, let $v_P$ be a vertex of $G_\A$ with valence at most $2$ and let $\A_P=\{\ell \in\A \mid P\in\ell \}$. If $\A\setminus\A_P$ is inductively connected then so is $\A$.
\end{lemma}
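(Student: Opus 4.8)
The plan is to build an explicit order on $\A$ witnessing inductive connectedness, by concatenating a good order on $\A\setminus\A_P$ with the special order on $\A_P$ provided by Lemma~\ref{lem:simple}, placing the lines through $P$ \emph{last}. Since $\A\setminus\A_P$ is inductively connected, first I would fix an order $\omega$ on $\A\setminus\A_P$ with $\max(\tau(\A\setminus\A_P,\omega))\leq 2$. As $v_P$ has valence at most $2$, Lemma~\ref{lem:simple} yields an order $\omega_P$ on $\A_P$ such that $\tau(\A,\omega'\oplus\omega_P,\ell)\leq 2$ for \emph{every} order $\omega'$ on $\A\setminus\A_P$ and every $\ell\in\A_P$. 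The candidate order is then $\omega\oplus\omega_P$ on $\A=(\A\setminus\A_P)\sqcup\A_P$, and it remains only to bound $\tau_i$ on the two blocks.

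For the second block, i.e.~the lines of $\A_P$, the bound $\tau(\A,\omega\oplus\omega_P,\ell)\leq 2$ is exactly the conclusion of Lemma~\ref{lem:simple} applied with $\omega'=\omega$. For the first block, i.e.~the lines of $\A\setminus\A_P$, I would invoke the first identity of Remark~\ref{rmk:tau_partition_independence}: since the lines of $\A_P$ are added only after all of $\A\setminus\A_P$, the partial arrangements indexed by the first block coincide with those of $\A\setminus\A_P$, whence $\tau_i(\A,\omega\oplus\omega_P)=\tau_i(\A\setminus\A_P,\omega)\leq 2$. Combining the two blocks gives $\max(\tau(\A,\omega\oplus\omega_P))\leq 2$, so $\A$ is inductively connected.

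Given the two preceding results, the argument is essentially a bookkeeping exercise, and the only point requiring care is organizational rather than technical: one must place $\A_P$ at the \emph{end} of the order so that Lemma~\ref{lem:simple} applies verbatim, and then correctly match the combinatorics of the initial segments of $\A$ with those of $\A\setminus\A_P$ through Remark~\ref{rmk:tau_partition_independence}. I do not expect a genuine obstacle here: the real content of the reduction has already been isolated in Lemma~\ref{lem:simple}, whose case analysis on the valence of $v_P$ does the heavy lifting.
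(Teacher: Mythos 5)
Your proof is correct and follows essentially the same route as the paper: take an order $\omega$ witnessing inductive connectedness of $\A\setminus\A_P$, concatenate it with the order $\omega_P$ from Lemma~\ref{lem:simple} (placing $\A_P$ last), bound $\tau$ on the block $\A_P$ by Lemma~\ref{lem:simple} and on the block $\A\setminus\A_P$ by the first identity of Remark~\ref{rmk:tau_partition_independence}. No gaps; this matches the paper's argument step for step.
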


\begin{proof}
	Assume that $\A\setminus\A_P$ inductively connected, then there exists an order $\omega_0$ on $\A\setminus\A_P$ such that $\tau(\A\setminus\A_P,\omega_0,\ell) \leq 2$, for all $\ell\in\A\setminus\A_P$. By Lemma~\ref{lem:simple}, there exists an order $\omega_P$ on $\A_P$ such that $\tau(\A,\omega_0\oplus\omega_P,\ell) \leq 2$, for all $\ell\in\A_P$. By Remark~\ref{rmk:tau_partition_independence}, we deduce that $\max(\tau(\A,\omega_0\oplus\omega_P))\leq 2$, and so that $\A$ is inductively connected.
\end{proof}

\begin{proof}[Proof of Theorem~\ref{thm:simple_arrangement}]
	For any multiple point $P$ of $\A$, we denote by $\A_P$ the arrangement $\{\ell\in\A\mid P\in\A\}$. Note that $G_{\A\setminus\A_P}$ is a (non necessarily strict) subgraph of $G_\A\setminus \overset{\circ}{\St}(v_P)$. The strategy of the proof is to remove one-by-one all the vertices of $G_\A$ in such a way that, at each step, the removed vertex has valence at most $2$.

	\medskip

	\noindent {\sc Step 1}. The vertices of $G'_\A$. %
	
	By hypothesis, $G'_\A$ is a forest. Let $v_P$ be an end-point of $G'_\A$, that is there is at most one edge in $G'_\A$ which meets $v_P$. Then, we know that the stars and the generalized net are disjoint. Furthermore, condition~(N3) implies that there are no two edges in the same generalized net that meet $v_P$. This implies that there is at most $1$ edge in the union of the stars and of the generalized nets which meet $v_P$. So the valence of $v_P$ is at most $2$. So by Lemma~\ref{lem:ic_induction}, if $\A\setminus\A_P$ is inductively connected then so is $\A$.\\
	As noted at the beginning of the proof, $G_{\A\setminus\A_P}$ is a subgraph of $G_\A \setminus \overset{\circ}{\St}(v_P)$. The remaining part of $G'_\A$ in $G_{\A\setminus\A_P}$ is also a forest (as a subgraph of a forest) and then we can apply the same argument on a new end-point. By successive applications of this argument, we obtain that if $\B=\A\setminus \bigcup_{v_P\in G'_\A} \A_P$ is inductively connected then so is $\A$.

	\medskip

	\noindent {\sc Step 2}. The vertices of $\St(v_P)$.

	The graph $G_\B$ is contained in the disjoint union of the stars and of the generalized nets. Remark that the union of the stars is also a forest. So we can apply exactly the same argument as in~{\sc step~1} to remove all the vertices included in any star. Thus, we obtain that if $\D=\B\setminus \bigcup_{v_P\in \cup \St(v_i)} \A_P$ is inductively connected then so is $\B$.

	\medskip

	\noindent {\sc Step 3}. The vertices of $\Net_i$.

	The graph $G_\D$ is contained in the disjoint union of the generalized nets. By definition, we have that $\Net_l=\Net(B_0,\dots,B_m)$ for a fixed $m$. Remark that all the vertices outside the reduced circles $B_i$ are not in $G_\D$ since they have been removed at Step~1. Let $v_P$ be the free vertex of $B_m$, and let $v_Q$ be one of its two neighbors in $B_m$. According to conditions~(N1) and~(N4), the valence of $v_f$ is exactly $2$. By Lemma~\ref{lem:ic_induction}, if $\D\setminus\A_P$ is inductively connected then so is $\D$. It follows from conditions~(N1) and~(N2), that in $G_{\D\setminus\A_P}$ the vertex $v_Q$ as valence at most $2$. So we can inductively apply this argument on all the vertices of $B_m$, and then on each reduced circle $B_i$ of $\Net_l$. At the end, we obtain that if $\D\setminus\bigcup_{i=0}^m\bigcup_{v_P\in B_i} \A_P$ is inductively connected then so is $\D$.

	Applying the latter on each generalized net $\Net(B^j_0,\dots,B^j_{m_j})$, we obtain that if the arrangement $\E = \D\setminus \bigcup_{j=0}^l\bigcup_{i=0}^{m_j}\bigcup_{v_P\in B^j_i} \A_P$ is inductively connected, then so is $\D$.

	\medskip

	As a consequence of the previous steps, the obtained graph $G_\E$ is empty. Thus, the arrangement $\E$ is either empty or generic. In both cases, we have that $\E$ is inductively connected. According to the previous steps, this implies that $\A$ is also inductively connected.
\end{proof}

The fact that the diffeomorphism type of the complement of both nice and generalized simple arrangements is combinatorially determined is well-known and forms the main theorems of~\cite{JiaYau} and~\cite{WangYau}, respectively. This can also be deduced from Theorem~\ref{thm:simple_arrangement}, Proposition~\ref{propo:ic} and Randell's Lattice–Isotopy Theorem~\cite{Ran:lattice_isotopy}.

\subsection{Inductively rigid arrangements}\label{sec:inductively_rigid}\mbox{}

Following a similar approach as for inductively connected arrangements, we introduce another combinatorial class of arrangements with connected moduli space.

\begin{definition}\label{def:inductively_rigid}
	An arrangement $\A\in\Arr_n$ is \emph{inductively rigid} if there exists an order $\omega$ such that for any arrangement $\A_i$ in the ascending chain~\eqref{eqn:ordered_chain}, we have $\M(\A_i)=\{\A_i\}$.
\end{definition}

Note that if an arrangement $\A$ is inductively rigid, then any $\A_i$ in the induced ordered chain~\eqref{eqn:ordered_chain} is also inductively rigid. %
As a consequence, any arrangement $\A$ with $|\A|\leq3$ is inductively rigid. In addition, no arrangement in the classes $\X(n)$ and $\ol{\X}(n+1)$ is inductively rigid for $n\geq4$, by Remark~\ref{rmk:moduli_pencils}.

\begin{proposition}\label{propo:inductively_rigid}
	An arrangement $\A\in\Arr_n$ is inductively rigid if and only if either $n\leq 4$ and $\A\notin\X(4)$, or $n\geq 5$ and there exists an order $\omega$ on $\A$ such that:
	\begin{enumerate}[label=(A\arabic*), ref=(A\arabic*)]
		\item \label{A1} $\A_4$ is generic, i.e. for all $i\in\{1,2,3,4\}$, one has $\tau_i(\A,\omega) = 0$,
		\item \label{A2} for all $i\in\{5,\dots,n\}$, one has $\tau_i(\A,\omega) \geq 2$.
	\end{enumerate}
\end{proposition}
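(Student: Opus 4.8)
The whole argument rests on a single fiber-dimension formula that refines the fibration analysis in the proof of Proposition~\ref{propo:ic}. The plan is to first establish the following key claim: for $i\ge 5$ and any order for which $\A_{i-1}\notin\X(i-1)\cup\ol{\X}(i-1)$ and $\M(\A_{i-1})=\{\A_{i-1}\}$, the moduli space $\M(\A_i)$ is a single point if and only if $\tau_i\ge 2$. To prove it I would use the normalization by $S_0$ from Section~\ref{sec:moduli_space}: since $\A_{i-1}$ contains four lines in general position, its unique isomorphism class has a representative with trivial stabilizer in $\PGL_3(\CC)$, so the forgetful map $\M(\A_i)\to\M(\A_{i-1})$ is surjective onto the single point and $\M(\A_i)$ is identified with the set of admissible positions of the new line $\ell_i$ in $\CC\wPP^2$. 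Following the proof of Proposition~\ref{propo:ic}, this is a nonempty Zariski-open subset of the linear system of lines through the $\tau_i$ distinct singular points of $\A_{i-1}$ that $\ell_i$ must contain. For $\tau_i\le 1$ this system has dimension $2-\tau_i$, while for $\tau_i\ge 2$ two distinct prescribed points already determine $\ell_i$, so the fiber collapses to a single point; nonemptiness follows from the realizability of $\A_i$.

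The case $n\le 4$ is then immediate from Remark~\ref{rmk:moduli_pencils}: every arrangement of at most three lines has a one-point moduli space, and for $n=4$ one has $\M(\A_4)=\{\A_4\}$ exactly when $\A\notin\X(4)$, while all proper subarrangements (being of at most three lines) are automatically rigid. For the implication $(\Leftarrow)$ with $n\ge 5$, assume an order satisfies~\ref{A1} and~\ref{A2}. Then $\A_4$ is generic, so $\M(\A_4)=\{\A_4\}$ and $\A_1,\A_2,\A_3$ are rigid as well; proceeding by induction on $i\ge 5$, each $\A_{i-1}\supseteq\A_4$ is non-pencil and rigid, and $\tau_i\ge 2$, so the key claim gives $\M(\A_i)=\{\A_i\}$. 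Hence $\A$ is inductively rigid.

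For the converse $(\Rightarrow)$ with $n\ge 5$, start from an order $\omega$ witnessing inductive rigidity. Since $\M(\A_4)$ and $\M(\A_5)$ are points, Remark~\ref{rmk:moduli_pencils} forces $\A_4\notin\X(4)$ and $\A_5\notin\X(5)\cup\ol{\X}(5)$; in particular the smallest index $i_0$ with $\A_{i_0}$ non-pencil is either $4$ or $5$, and $i_0=4$ is equivalent to $\A_4$ being generic. If $\A_4$ is generic, the key claim applied along the chain shows $\tau_i\ge 2$ for all $i\ge 5$, so $\omega$ already satisfies~\ref{A1} and~\ref{A2}. The remaining case $\A_4\in\ol{\X}(4)$ (so $i_0=5$) is the \emph{main obstacle}: here $\omega$ fails~\ref{A1}, and I would repair it by reordering. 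Choose a generic four-line subarrangement $\B\subseteq\A_5$ and define a new order $\omega'$ putting $\B$ first, then the fifth line of $\A_5$, then $\ell_6,\dots,\ell_n$ in their original relative order; this leaves $\A'_j=\A_j$ as sets for all $j\ge 5$. Now $\A'_4=\B$ is generic, giving~\ref{A1}, and for each $i\ge 5$ the arrangement $\A'_{i-1}$ is non-pencil and rigid—its moduli space equals that of $\A_{i-1}$ or of $\B$, a single point—while $\M(\A'_i)=\M(\A_i)$ is a point; the key claim then forces $\tau'_i\ge 2$, establishing~\ref{A2}.

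The delicate points to verify are exactly the two places where the key claim is stressed: that reordering in the last paragraph does not disturb rigidity of the tail of the chain, which is guaranteed by the identity $\A'_j=\A_j$ for $j\ge 5$ together with order-independence of the moduli space; and that in the key claim the fiber for $\tau_i\ge 2$ is a genuine single point rather than empty, which follows from the realizability of each $\A_i$. I expect the bookkeeping of the base of the chain (showing $i_0\in\{4,5\}$ and isolating the $\ol{\X}(4)$ case) to be the most error-prone part, but it is controlled entirely by Remark~\ref{rmk:moduli_pencils}.
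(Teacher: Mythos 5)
Your proof is correct, and although it is built from the same two ingredients as the paper's proof (Remark~\ref{rmk:moduli_pencils} for pencils and near-pencils, and the fibration analysis of \cite[Lemma~3.2]{NazirYoshinaga12}, which is your ``key claim''), your organization of the ``only if'' direction is genuinely different --- and in fact more complete than the paper's. The paper argues by three cases: every order fails \ref{A1} (then $\A\in\X(n)\cup\ol{\X}(n)$, hence not rigid); an order fails \ref{A2} (then some $\tau_i<2$, which the paper claims forces $\dim_\CC\M(\A_i)>0$); some order satisfies both (then rigid, which is your $(\Leftarrow)$ direction). These cases are not exhaustive, and the claimed implication ``$\tau_i<2\Rightarrow\dim_\CC\M(\A_i)>0$'' fails precisely when $\A_{i-1}$ contains no four lines in general position: for $\ell_1\colon x=0$, $\ell_2\colon y=0$, $\ell_3\colon x-y=0$, $\ell_4\colon z=0$, $\ell_5\colon x+z=0$, with the order given by the indices, one has $\A_4\in\ol{\X}(4)$ and $\tau_5=1$, yet $\M(\A_5)$ is a single point, because the positive-dimensional stabilizer of $\A_4$ in $\PGL_3(\CC)$ absorbs the one-parameter freedom of $\ell_5$. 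So an order may witness inductive rigidity while violating both \ref{A1} and \ref{A2}, and the paper never explains how to pass from such a witness to an order satisfying the two conditions. This is exactly what your proof supplies: the hypothesis $\A_{i-1}\notin\X(i-1)\cup\ol{\X}(i-1)$ in your key claim is what legitimizes identifying $\M(\A_i)$ with the fiber of admissible lines (trivial stabilizer), and your reordering step --- replacing the first five lines by a generic quadruple $\B\subset\A_5$ followed by the remaining line, which leaves each $\A_j$ with $j\geq 5$ unchanged as a set and hence leaves each $\M(\A_j)$ untouched --- converts any rigidity witness with $\A_4\in\ol{\X}(4)$ into an order satisfying \ref{A1} and \ref{A2}, in the same spirit as the paper's Proposition~\ref{propo:ic_square_basis}. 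In short, your argument costs some extra bookkeeping of the non-pencil hypothesis along the chain, but it buys a complete proof of the equivalence, closing a case that the paper's shorter case analysis leaves open.
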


\begin{proof}
	Any arrangement of at most three lines is inductively rigid. Whenever $n = 4$, either $\A\in\X(4)$ or $\A$ is fixed by the $\PGL_3$-action and thus $\M(\A)=\set{\A}$. The first case is not inductively rigid while the second one is. Assume that $n\geq 5$, one has that:
	\begin{itemize}
	  \item If for any order $\omega$ the condition \ref{A1} does not hold, then $\A\in\X(n)\cup\ol{\X}(n)$ with $n\geq5$ and thus, due to Remark~\ref{rmk:moduli_pencils}, $\A$ is not inductively rigid.
	  
	  \item If for any order $\omega$, the condition \ref{A2} does not hold, then there exists $i\geq 5$ such that $\tau_i(\A,\omega)<2$. This implies that $\dim_\CC \M(\A_i) > 0$ and thus $\A$ is not inductively rigid.
	  
	  \item If there exists an order $\omega$ over $\A$ such that \ref{A1} and \ref{A2} hold, then by a recursive application of~\cite[Lemma~3.2]{NazirYoshinaga12}, we deduce that $\M(\A_i)=\{\A_i\}$ for all $i\in\{5,\dots,n\}$. Then, $\A$ is inductively rigid.
	\end{itemize}
	Since we have covered all possible cases, we obtain the stated equivalence.
\end{proof}

As an important consequence of the previous result, we conclude that the class of inductively rigid arrangements is combinatorial. The former class is not contained in the class of inductively connected arrangements, as it is shown in the following example.

\begin{example}
	For $n\geq 1$, consider the family of arrangements $\mathcal{Y}_n$ defined by the $4(n+1)$ lines: $z=0$, $x - k z = 0$, $y - k z = 0$, for $k\in\{0,\dots,n\}$ and $x + y - k z = 0$ for $k\in\{0,\dots,2n\}$, see Figure~\ref{fig:ir_arrangement}. It is left to the reader to verify that the arrangements $\mathcal{Y}_n$ are inductively rigid but not inductively connected for $n\geq2$.
\end{example}

\begin{figure}[h]
	\begin{tikzpicture}[scale=0.75]
		\tikzstyle{droite}=[line width=1pt]
		\draw[droite] (-3,0) -- (5,0);
		\draw[droite] (-3,1) -- (5,1);
		\draw[droite] (-3,2) -- (5,2);

		\draw[droite] (0,-3) -- (0,5);
		\draw[droite] (1,-3) -- (1,5);
		\draw[droite] (2,-3) -- (2,5);

		\draw[droite] (-3,3) -- (3,-3);
		\draw[droite] (-3,4) -- (4,-3);
		\draw[droite] (-3,5) -- (5,-3);
		\draw[droite] (-2,5) -- (5,-2);
		\draw[droite] (-1,5) -- (5,-1);

		\draw[droite] (5.5,4) to[out=100,in=-10] node [above right, near start] {$\infty$} (4,5.5);
	\end{tikzpicture}
	\caption{The arrangement $\mathcal{Y}_2$.\label{fig:ir_arrangement}}
\end{figure}
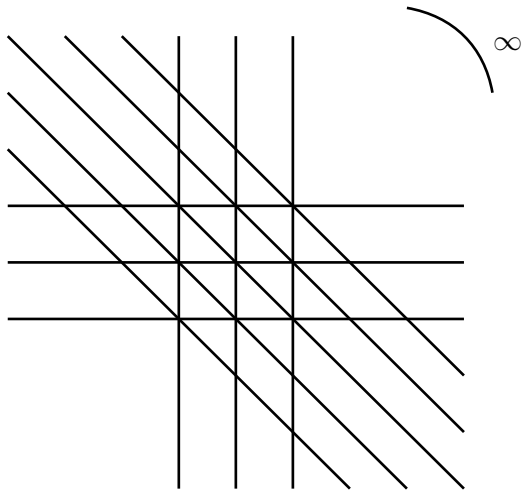

\subsection{Arrangements with a rigid pencil form}\label{sec:rigid_pencil_form}\mbox{}

An arrangement $\A$ is $C_k$ if $k$ is the minimal integer such that there exists a subarrangement $\D_k\subset \A$ of $k$ lines such that $\Sing_{\geq3}(\A)$ is contained in $\D_k$. Nazir and Yoshinaga proved that if an arrangement is $C_0$, $C_1$ or $C_2$ then it is inductively connected, and so that its moduli space is connected, see~\cite[Theorem.~3.11]{NazirYoshinaga12}. They also introduced the combinatorial class $C_3$ of simple\footnote{The term simple used here is not related to the one of Section~\ref{sec:nice_simple}.} type.

\begin{definition}\label{def:C3_simple}
	An arrangement $\A$ of class $C_3$ is of \emph{simple type} if one of following conditions holds:
	\begin{enumerate}
		\renewcommand{\labelenumi}{(\roman{enumi})}
		\item the three lines of $\D_3$ are in generic position, and one of them contains a unique multiple point,
		\item the three lines of $\D_3$ are concurrent.
	\end{enumerate}
\end{definition}

\begin{theorem}[{\cite[Thm.~3.15]{NazirYoshinaga12}}]
	If $\A$ is a $C_3$ arrangement of simple type then its moduli space $\M(\A)$ is connected.
\end{theorem}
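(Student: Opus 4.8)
The plan is to realize $\M(\A)$ as a tower of fibrations over a connected base, exploiting the fact that the $C_3$ hypothesis forces every multiple point of $\A$ to lie on the three lines of $\D_3$. First I would use the $\PGL_3(\CC)$-action to freeze $\D_3$ in standard position: three generic lines (a standard triangle) in case~(i) of Definition~\ref{def:C3_simple}, or three lines through a common point $O$ in case~(ii). In both situations the moduli of $\D_3$ alone is a single point, so by the description in~\eqref{eqn:moduli_space} we may regard $\M(\A)$ as the space of admissible completions $(\ell_4,\dots,\ell_n)$ with $\D_3$ fixed.

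Next, since $\A$ is of class $C_3$, a line $\ell_i\notin\D_3$ meets $\D_3$ in at most three points, at most one on each of $\ell_1,\ell_2,\ell_3$, so $\ell_i$ passes through at most three multiple points. Once the positions of the multiple points along the three lines of $\D_3$ are prescribed, each $\ell_i$ is either uniquely determined (if it is required to pass through two or three of them) or varies in a $\CC$- or $\CC^2$-fibration (if it meets only one, resp.\ none, of them). I would therefore factor the construction through the base $B$ recording the positions of the multiple points on $\D_3$. Following the fibration analysis in the proof of Proposition~\ref{propo:ic} (i.e.\ \cite[Lemma~3.2]{NazirYoshinaga12}), the steps with $\tau_i\le 2$ contribute connected affine or point fibers, so the connectedness of $\M(\A)$ reduces to the connectedness of the locus $B'\subset B$ of configurations satisfying all the incidence constraints, after deleting the open genericity conditions $\Delta_{i,j,k}\neq 0$.

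The crux is to show that $B'$ is connected, in fact irreducible. Each line through three multiple points, one on each line of $\D_3$, imposes a single collinearity relation coupling the three coordinate systems on $\ell_1,\ell_2,\ell_3$, and it is precisely the uncontrolled interaction of several such relations that produces Pappus-type disconnections. Here the two defining conditions of simple type are what break this coupling. I would order the multiple points so that the collinearity relations form a triangular system: for each such relation one chooses two of its three incident points freely and lets the relation determine the third as a rational function of the previous coordinates. In case~(i) the line of $\D_3$ carrying a unique multiple point provides a root from which the remaining incidences can be resolved one at a time; in case~(ii) the concurrency of $\D_3$ at $O$ plays the analogous role, since every line either passes through $O$—contributing only the single multiple point $O$—or crosses the three lines transversally. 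In either case the simple-type hypothesis should guarantee that the constraints admit an acyclic ordering, exhibiting $B'$ as an open subset of an iterated tower of affine fibrations, hence irreducible.

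Finally I would assemble the pieces: $B'$ is irreducible, the fibration steps over it are connected by the fiber description in the proof of Proposition~\ref{propo:ic}, and deleting a proper Zariski-closed non-genericity locus from an irreducible variety leaves it connected; therefore $\M(\A)$ is connected. The main obstacle I anticipate is the triangularization step: one must verify, in full combinatorial generality, that the simple-type hypotheses forbid a cyclic chain of collinearity constraints among the multiple points, so that the incidence relations can always be solved one coordinate at a time. Making this bookkeeping precise—rather than the fibration formalism, which is routine given Proposition~\ref{propo:ic}—is where the genuine content of the theorem lies.
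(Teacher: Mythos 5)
Your reduction to the configuration space of multiple points on $\D_3$ is sound, and your sketch does work in case~(i) of Definition~\ref{def:C3_simple}: there every collinearity constraint comes from a line through the \emph{unique} multiple point $Q_0$ of $\ell''$, and two distinct lines through $Q_0$ meet only at $Q_0$, so distinct constraints involve disjoint points of $\ell$ and $\ell'$ --- the constraint system is a matching, trivially triangular. The genuine gap is the step you yourself flag as the crux: the claim that in case~(ii) the simple-type hypothesis forbids cyclic chains of collinearity constraints. This claim is false. Take $\D_3$ to be the vertical lines $x=0,\,x=1,\,x=2$ (concurrent at $(0:1:0)$) together with the six transversals $y=\tfrac32 x$, $y=\tfrac12 x$, $y=x+1$, $y=1-\tfrac12 x$, $y=3-x$, $y=3-\tfrac32 x$. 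This is a realizable arrangement of $9$ lines, of class $C_3$ and of simple type~(ii); its multiple points are the concurrency point together with nine triple points (three on each vertical line), and \emph{each} of these nine points lies on exactly two transversals. Hence in the collection of six collinearity constraints every coordinate occurs in at least two constraints, so in any ordering the last constraint contains no fresh coordinate: no triangular (acyclic) resolution exists, and your tower-of-fibrations argument cannot even get started on this Pappus-type $9_3$ configuration. Worse, the "compatibility conditions" that cycles force are exactly the mechanism that produces reducible moduli spaces (e.g.\ MacLane), so your framework has no way to rule out disconnection here.

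What actually saves the theorem is not acyclicity but a choice of coordinates in which \emph{all} constraints are simultaneously affine-linear; this is how Nazir--Yoshinaga prove the concurrent case, and it is also the proof of the paper's generalization (the paper only cites the present theorem, and instead proves Theorem~\ref{thm:C3_generalization} for the broader class of Definition~\ref{def:rigid_pencil}, which subsumes both cases of Definition~\ref{def:C3_simple}). Put the common point $P_0$ of $\D_3$ at $(0:1:0)$, so that the lines through $P_0$ carrying the multiple points are $x-\lambda_i z=0$ with the $\lambda_i$ \emph{fixed} constants; each multiple point is then $(\lambda_i:\alpha_i:1)$ with a single unknown $\alpha_i$, and every collinearity condition becomes an affine-linear equation in the $\alpha_i$'s with constant coefficients. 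The realization locus is therefore a Zariski-open subset of the kernel of an affine map, hence irreducible and connected, no matter how the constraints are coupled: a cycle merely lowers the dimension or produces a dependent equation. (In the example above, the sixth linear constraint is the combination $E_1+E_4+E_5-E_2-E_3$ of the other five --- the linearization turns Pappus' theorem into a statement of linear algebra, which is precisely what your rational-triangularization scheme cannot do.) Replacing your triangularization step by this linearization is what is needed to complete the proof.
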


The main argument to prove that $C_3$ arrangement of simple type with concurrent lines is irreducible use the fact that one can fix the three lines using the action of $\PGL_3(\CC)$. Using the combinatorial notion of inductively rigid arrangements, one can define a larger class of arrangements with a connected moduli space.

\begin{definition}\label{def:rigid_pencil}
	An arrangement $\A$ has a \emph{rigid pencil form} if it contains an inductively rigid subarrangement $\A'$ with a singular point $P_0$ such that for any multiple point $P\in\Sing_{\geq 3}(\A)$ one of the following conditions holds:
	\begin{enumerate}[label=(RP\arabic*), ref=(RP\arabic*), leftmargin=4em]
		\item\label{RP1} $P$ is a singular point of $\A'$, 
		\item\label{RP2} the line $(P,P_0)$ is in $\A'$.
	\end{enumerate}
\end{definition}

\begin{proposition}
  Any $C_3$ arrangement of simple type has a rigid pencil form.
\end{proposition}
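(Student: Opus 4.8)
The plan is to treat separately the two configurations allowed by Definition~\ref{def:C3_simple}, writing $\D_3=\{\ell_1,\ell_2,\ell_3\}$ for the three lines carrying all of $\Sing_{\geq3}(\A)$. In both cases I will exhibit an explicit inductively rigid subarrangement $\A'$ together with a singular point $P_0\in\A'$, and then verify \ref{RP1}--\ref{RP2} for every $P\in\Sing_{\geq3}(\A)$. The only property of $\A$ that I use is the $C_3$ condition, namely $\Sing_{\geq3}(\A)\subset \ell_1\cup\ell_2\cup\ell_3$.

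For the concurrent case (ii), I would take $\A'=\D_3$ and let $P_0$ be the common point of $\ell_1,\ell_2,\ell_3$. Since $|\A'|=3$, the subarrangement $\A'$ is inductively rigid by the remark following Definition~\ref{def:inductively_rigid}, and $P_0$ is its singular point. Fix $P\in\Sing_{\geq3}(\A)$. If $P=P_0$ then \ref{RP1} holds. Otherwise the $C_3$ property places $P$ on some $\ell_j$, and $P$ cannot lie on a second line $\ell_k$ of the pencil (two distinct lines of $\D_3$ already meet at $P_0\neq P$); hence $(P,P_0)=\ell_j\in\A'$ and \ref{RP2} holds. This settles case (ii).

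For the generic case (i), say $\ell_3$ is the line of $\D_3$ carrying a unique multiple point $\tilde P$. I would first observe that $\tilde P$ must be \emph{interior} to $\ell_3$, i.e.\ $\tilde P\notin\{\ell_1\cap\ell_3,\,\ell_2\cap\ell_3\}$: otherwise the unique multiple point of $\ell_3$ would already lie on $\ell_1\cup\ell_2$, forcing $\Sing_{\geq3}(\A)\subset\ell_1\cup\ell_2$ and contradicting the minimality in the definition of $C_3$. In particular the two vertices $\ell_1\cap\ell_3$ and $\ell_2\cap\ell_3$ are double points. Since $\tilde P$ is a multiple point, there is a line $M\in\A$ through $\tilde P$ with $M\neq\ell_3$ (and automatically $M\neq\ell_1,\ell_2$, as $\tilde P$ is interior to $\ell_3$). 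I then set $P_0=\ell_1\cap\ell_2$ and $\A'=\{\ell_1,\ell_2,\ell_3,M\}$. Because $\ell_1,\ell_2,\ell_3$ form a triangle, the four lines of $\A'$ are not concurrent, so $\A'\notin\X(4)$ and $\A'$ is inductively rigid by Proposition~\ref{propo:inductively_rigid}; moreover $P_0$ is a singular point of $\A'$. To verify the conditions, note that $\tilde P=\ell_3\cap M$ is a singular point of $\A'$, so \ref{RP1} holds for it. Any other $P\in\Sing_{\geq3}(\A)$ lies on $\ell_1\cup\ell_2$ (it lies on $\ell_1\cup\ell_2\cup\ell_3$ by $C_3$, and $\tilde P$ is the only multiple point on $\ell_3$); if $P=P_0$ we use \ref{RP1}, while if $P\neq P_0$ lies on $\ell_j$ with $j\in\{1,2\}$ then, since $P_0\in\ell_j$ as well, $(P,P_0)=\ell_j\in\A'$ and \ref{RP2} holds.

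The main obstacle is precisely the generic case (i): the triangle $\D_3$ is already inductively rigid, yet with any vertex as $P_0$ it fails to account for the interior multiple point $\tilde P$ of $\ell_3$, which is neither a vertex of the triangle nor joined to $P_0$ by a side. Overcoming this requires two ingredients to be pinned down carefully: the combinatorial fact that $\tilde P$ is forced to be interior (so that the two sides $\ell_1,\ell_2$ through $P_0=\ell_1\cap\ell_2$ cover, via \ref{RP2}, all remaining multiple points), and the enlargement of $\D_3$ by a fourth line $M\in\A$ through $\tilde P$, which turns $\tilde P$ into a genuine singular point of $\A'$ while keeping $\A'$ a non-concurrent, hence inductively rigid, four-line arrangement.
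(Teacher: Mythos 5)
Your proof is correct and takes essentially the same route as the paper's: case (ii) is handled identically via the pencil $\D_3$, and in case (i) both arguments enlarge to a four-line, non-concurrent (hence inductively rigid) subarrangement $\A'$ with $P_0=\ell_1\cap\ell_2$, so that the lone multiple point of $\ell_3$ becomes a singular point of $\A'$ (condition \ref{RP1}) while every other multiple point, lying on $\ell_1\cup\ell_2$, is covered by \ref{RP2}. The only difference is cosmetic: the paper replaces $\ell_3$ by \emph{two} lines of $\A$ through $Q_0$ avoiding $\ell_1\cap\ell_2$, whereas you keep $\ell_3$, adjoin a single line $M$ through $\tilde P$, and justify the construction by first showing that $C_3$-minimality forces $\tilde P$ to be interior to $\ell_3$.
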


\begin{proof}
  Let $\A$ be a $C_3$ arrangement of simple type. Assume that $\A$ verifies condition~(i) of Definition~\ref{def:C3_simple}. Let $\D_3=\{\ell,\ell',\ell''\}$ be a generic subarrangement that contains all the multiple points of $\A$, and such that $\ell''$ contains a unique multiple point $Q_0$. There exist two lines $\ell_{Q_0},\ell'_{Q_0}\in\A$ passing through $Q_0$ which do not contained $\ell\cap\ell'$. The arrangement $\A'=\{\ell,\ell',\ell_{Q_0},\ell'_{Q_0}\}$ is inductively rigid. The multiple point $Q_0$ is contained in $\ell_{Q_0}$ and $\ell'_{Q_0}$, and all the other multiple points of $\A$ are contained in $\ell \cup \ell'$. So $\A$ has a rigid pencil form.

  Assume that $\A$ verifies condition~(ii) of Definition~\ref{def:C3_simple}. Then any multiple point is in the $3$-pencil $\D_3$. Since $\D_3$ is in $\X(3)$, then it is inductively rigid. So $\A$ has a rigid pencil form.
\end{proof}

\begin{theorem}\label{thm:C3_generalization}
	If $\A$ has a rigid pencil form then its moduli space $\M(\A)$ is connected.
\end{theorem}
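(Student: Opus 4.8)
The plan is to fix the rigid frame and then grow the remaining lines by an inductive fibration argument, in the spirit of Proposition~\ref{propo:ic} and \cite[Lemma~3.2]{NazirYoshinaga12}. Since $\A'$ is inductively rigid, Proposition~\ref{propo:inductively_rigid} gives $\M(\A')=\{\A'\}$; concretely $\A'$ contains four lines in general position, so its realisation can be completely fixed by the $\PGL_3(\CC)$-action, placing the centre $P_0$ and every line of $\A'$ in a fixed position. Choosing an order on $\A$ whose initial segment realises the rigid order on $\A'$, the space $\M(\A)$ becomes the fibre over the single point $\M(\A')$, namely the space of completions of the fixed frame by the lines of $\A\setminus\A'$ that are compatible with the combinatorics of $\A$. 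It then suffices to prove that this fibre is connected, which I would do by inserting the lines of $\A\setminus\A'$ one at a time and checking that every intermediate step preserves connectedness.

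The rigid pencil form is exactly what controls each insertion. When a line $\ell$ is added, the concurrence constraints on $\ell$ come from the multiple points of the current arrangement lying on $\ell$, and by Definition~\ref{def:rigid_pencil} each such point $P$ satisfies \ref{RP1} or \ref{RP2}. If $P$ satisfies \ref{RP1}, then $P\in\Sing(\A')$ is already fixed by the frame; if $P$ satisfies \ref{RP2}, then $P$ lies on the fixed line $(P,P_0)\in\A'$ through the fixed centre $P_0$, and the decisive point is that, since $(P,P_0)$ is already present, such a $P$ becomes located as soon as a single further line passes through it. I would first dispose of the lines $\ell\in\A\setminus\A'$ through $P_0$: any multiple point of $\ell$ other than $P_0$ cannot satisfy \ref{RP2} (otherwise $\ell=(P,P_0)\in\A'$), hence satisfies \ref{RP1} and is fixed, so $\ell$ is either determined by $P_0$ together with a fixed point, or ranges over an open subset of the pencil centred at the fixed point $P_0$, which is connected in either case.

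It remains to insert the transversal lines, those avoiding $P_0$, whose constraining points lie on the fixed lines of the frame through $P_0$ (for \ref{RP2}) or are themselves fixed (for \ref{RP1}). The main obstacle is that such a transversal may be forced through three or more already-located multiple points, which a priori imposes a collinearity condition that could disconnect the fibre. Here the common centre $P_0$ is decisive: the relevant points sit on concurrent fixed lines, so requiring a transversal to meet several of them is an incidence condition whose solution locus is the graph of the elementary construction ``join two of the points and intersect with the remaining fixed lines'', hence connected and rational rather than a possibly reducible collinearity locus. Exploiting this, I would schedule the insertions so that at each step the new line either is pinned by the fixed frame, or sweeps out an open subset of a pencil or a plane over a connected incidence locus; each such step is a fibration with connected fibre over a connected base, so connectedness propagates from $\M(\A')=\{\A'\}$ up to $\M(\A)$. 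The technical heart of the argument is to verify that the constraints can always be organised into these connected incidence loci, and that the finitely many degenerate configurations excluded at each step never disconnect the fibre.
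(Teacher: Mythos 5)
Your setup (fixing $\A'$ via its rigidity, normalizing $P_0$, and splitting the multiple points according to \ref{RP1}/\ref{RP2}) matches the paper, and your treatment of the lines through $P_0$ is correct. But the core of your argument --- inserting the transversal lines one at a time so that ``each such step is a fibration with connected fibre over a connected base'' --- has a genuine gap, and you essentially concede it yourself when you defer ``the technical heart'' to an unverified scheduling claim. The problem is that no schedule can avoid coupled constraints in general. Consider floating points $P_1,\dots,P_6$ (type \ref{RP2}, each sliding on its fixed line of $\A'$ through $P_0$) and four transversals with incidences $\ell_a\supset\{P_1,P_2,P_3\}$, $\ell_b\supset\{P_1,P_4,P_5\}$, $\ell_c\supset\{P_2,P_4,P_6\}$, $\ell_d\supset\{P_3,P_5,P_6\}$: after inserting $\ell_a,\ell_b,\ell_c$ every point $P_3,P_5,P_6$ is already determined, so inserting $\ell_d$ is not a fibration at all but a codimension-one collinearity condition on the space already built. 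Your proposed remedy (the ``join two points and intersect with the remaining fixed lines'' incidence locus) is only justified for a single transversal whose points are otherwise unconstrained; once floating points are shared among several transversals --- the only interesting case --- you would have to prove that the \emph{global} incidence variety is irreducible, and your proposal gives no argument for that. A priori, the last constraint, written in your step-by-step parameters, is a higher-degree polynomial that could be reducible, which is exactly the Pappus-type phenomenon the theorem must rule out.

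The paper closes this gap not by ordering the insertions but by a single global parametrization that makes the linear structure visible: after assuming $\A'$ maximal and normalizing $P_0=(0:1:0)$, each floating point is $P_i=(\lambda_i:\alpha_i:1)$ with $\lambda_i$ a \emph{fixed} constant (this is where \ref{RP2} enters) and $\alpha_i$ an unknown, and then \emph{every} constraint --- three floating points collinear, or two floating points collinear with a fixed point of $\Sing(\A')$ --- is an affine equation in the $\alpha_i$'s, no matter how the transversals share points. Hence the closure of $\M(\A)$ is the solution set of an affine system, i.e.\ an affine subspace, which is irreducible; $\M(\A)$ is a nonempty Zariski-open subset of it and is therefore connected. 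In your coordinates this linearity is invisible, which is precisely why the fibration induction cannot be completed as stated; to repair your proof you would have to abandon the line-by-line scheme at the transversal stage and pass to this simultaneous affine parametrization.
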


\begin{proof}
	The idea of the proof is similar to the one of~\cite[Proof~of~Theorem~3.15, case~(ii)]{NazirYoshinaga12}. One can assume first that $\A'$ is maximal, in the sense that for all $\ell\in\A\setminus\A'$ one has $|\ell\cap\Sing(\A')| < 2$. Up to a projective transformation, one can also assume that $P_0$ has coordinates $(0:1:0)$ and that any line $\ell_i\in\A$ which pass through $P_0$ is defined by the equation $x - \lambda_i z = 0$, for a fixed $\lambda_i\in\CC$. %
	Denote by $P_i$ the multiple points of $\A$ which are not singular points of $\A'$. By Condition~\ref{RP2}, each $P_i$ has coordinates $(\lambda_i:\alpha_i:1)$, for some $\alpha_i\in\CC$. The moduli space is then defined by some collinearity relations between the points $P_i$ and the singular points of $\A'$. More precisely, there are two types:
	\begin{itemize}
		\item The collinearity between three points  $(P_i,P_j,P_k)$ given by the linear equation on $\alpha_i$,  $\alpha_j$ and~$\alpha_k$:
		\[
			\alpha_i (\lambda_k - \lambda_j) + \alpha_j (\lambda_i - \lambda_k) + \alpha_k (\lambda_j - \lambda_k) = 0.
		\]
		\item The collinearity between a point $Q\in\Sing(\A')$ and two points $P_i$ and $P_j$
		given by the affine equation on $\alpha_i$ and~$\alpha_j$:
		\[
			\alpha_i (r_Q - \lambda_j t_Q) + \alpha_j (\lambda_i t_Q - r_Q) + s (\lambda_i - \lambda_j) = 0.
		\]
		where $Q=(r_Q:s_Q:t_Q)\in\CC\PP^2$.
	\end{itemize}
	Due to the assumption that $\A'$ is maximal, there is no other type of relations. %
	Since $\M(\A')=\set{\A'}$ by hypothesis, the moduli space $\M(\A)$ is described by the previous equations on the $\alpha_i$'s with fixed values $\set{\lambda_i}_{\ell_i\in\A'}$ and $\set{(r_Q:s_Q:t_Q)}_{Q\in\Sing(\A')}$. %
	Thus, $\M(\A)$ is an open Zariski subset of the kernel of an affine map defined by the previous equations. As a consequence, it is irreducible and then connected.
\end{proof}

\begin{example}
	Consider the arrangement $\A\in\Arr_{10}$ pictured in Figure~\ref{fig:k_pencil} and defined by the equations:
	\[
		\begin{array}{cccc}
			\ell_1: x = 0, &\quad \ell_2: x - z = 0, &\quad \ell_3: y = 0, &\quad \ell_4: y - z = 0, \\
			\ell_5: z = 0, &\quad \ell_6: -x + y = 0, &\quad \ell_7: x + y - z = 0, &\quad \ell_8: -2x + 4y - z = 0, \\
			\ell_9: 2x - 3y + z = 0, &\quad \ell_{10}: -4x + 6y = 0.
		\end{array}
	\]
	Its combinatorics is given by:
	\[
		\left\lbrace%
		\begin{array}{c}
			\left\{\ell_1,\ell_2,\ell_5\right\},\left\{\ell_1,\ell_3,\ell_6,\ell_{10}\right\},\left\{\ell_1,\ell_4,\ell_7\right\},\left\{\ell_1,\ell_8\right\},\left\{\ell_1,\ell_9\right\},\left\{\ell_2,\ell_3,\ell_7\right\}\\[0.3em]
			\left\{\ell_2,\ell_4,\ell_6,\ell_9\right\},\left\{\ell_2,\ell_8\right\},\left\{\ell_2,\ell_{10}\right\}, \left\{\ell_3,\ell_4,\ell_5\right\},\left\{\ell_3,\ell_8,\ell_9\right\},\left\{\ell_4,\ell_8,\ell_{10}\right\} \\[0.3em]
			\left\{\ell_5,\ell_6\right\},\left\{\ell_5,\ell_7\right\},\left\{\ell_5,\ell_8\right\},\left\{\ell_5,\ell_9,\ell_{10}\right\},\left\{\ell_6,\ell_7,\ell_8\right\},\left\{\ell_7,\ell_9\right\},\left\{\ell_7,\ell_{10}\right\}
		\end{array}
		\right\rbrace.
	\]
	Let $\A'=\{\ell_1,\dots,\ell_7\}$ be the subarrangement of $\A$ with combinatorics:
	\[
		\left\{\left\{\ell_1,\ell_2,\ell_5\right\},\left\{\ell_1,\ell_3,\ell_6\right\},\left\{\ell_1,\ell_4,\ell_7\right\},\left\{\ell_2,\ell_3,\ell_7\right\},\left\{\ell_2,\ell_4,\ell_6\right\},\left\{\ell_3,\ell_4,\ell_5\right\},\left\{\ell_5,\ell_6\right\},\left\{\ell_5,\ell_7\right\},\left\{\ell_6,\ell_7\right\}\right\}.
	\]
	Once again, if we consider the order $\omega$ induced by the indices, we have $\tau(\A',\omega)=(0,0,0,0,2,2,2)$ and thus  $\A'$ is an inductively rigid arrangement. Furthermore, all multiple points of $\A$ are contained in the three concurrent lines $\ell_3$, $\ell_4$ and $\ell_5$, except for the point $\{\ell_6,\ell_7,\ell_8\}$ which contains the lines $\ell_6$ and $\ell_7$ of $\A'$. Thus $\A$ has a rigid pencil form. By Theorem~\ref{thm:C3_generalization}, we deduce that $\M(\A)$ is connected.

	It is worth noticing that all multiple points of $\A$ are also contained in the $4$-pencil $\{\ell_1, \ell_3, \ell_6, \ell_{10}\}$ or $\{\ell_2, \ell_4, \ell_6, \ell_9\}$. This implies that $\A$ is a $C_4$ arrangement. Nevertheless, any of these two $4$-pencils is contained in an inductively rigid subarrangement of $\A$. So $\A$ does not admit a rigid pencil form.

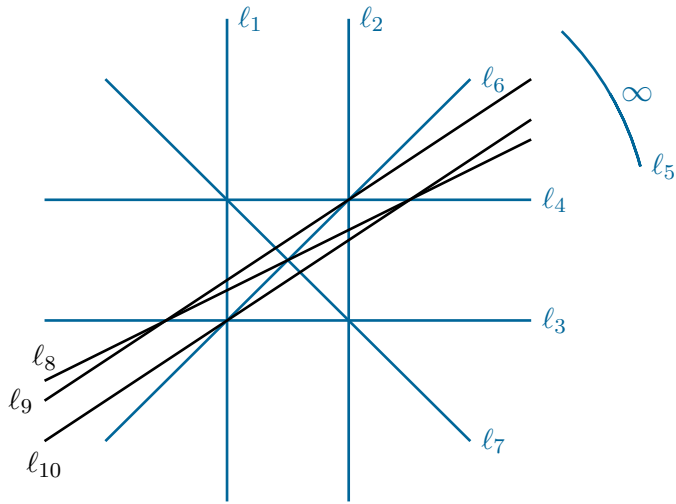
\begin{figure}[h]
	\begin{tikzpicture}[scale=0.8]
		\def\Pout{4}
		\tikzstyle{droites}=[line width=1pt]
		\draw[droites, blue!60!green] (-\Pout,-1) -- (\Pout,-1) node[right] {$\ell_3$};
		\draw[droites, blue!60!green] (-\Pout,1) -- (\Pout,1) node[right] {$\ell_4$};

		\draw[droites, blue!60!green] (1,-\Pout) -- (1,\Pout) node[right] {$\ell_2$};
		\draw[droites, blue!60!green] (-1,-\Pout) -- (-1,\Pout) node[right] {$\ell_1$};

		\draw[droites, blue!60!green] ([shift=(15:\Pout+2)]0,0) node[right] {$\ell_5$}  arc (15:30:\Pout+1) node[right] {\large $\infty$} ([shift=(15:\Pout+2)]0,0) arc (15:45:\Pout+1);

		\draw[droites, blue!60!green] (1-\Pout,\Pout-1) -- (\Pout-1,1-\Pout) node[right] {$\ell_7$};
		\draw[droites, blue!60!green] (1-\Pout,1-\Pout) -- (\Pout-1,\Pout-1) node[right] {$\ell_6$};

		\draw[droites,black] (4,3) -- (-4,-2.33) node[left] {$\ell_9$};
		\draw[droites,black] (4,2.33) -- (-4,-3) node[below] {$\ell_{10}$};
		\draw[droites,black] (\Pout,\Pout-2) -- (-\Pout,-2) node[above] {$\ell_{8}$};
	\end{tikzpicture}
	\caption{An arrangement with rigid pencil form.\label{fig:k_pencil}}
\end{figure}
\end{example}

\begin{remark}
	The statement of Theorem~\ref{thm:C3_generalization} still holds if the subarrangement $\A'$ verifies that $\M(\A')=\{\A'\}$ instead of being inductively rigid. Nevertheless, one should notice that this is a geometric but not combinatorial condition.
\end{remark}

\section{On the number of connected components of $\M(\A)$}\label{sec:irred_components}

The objective of this section is to introduce a strategy to inductively build an upper bound on the number of connected components of $\M(\A)$ by means of certain ``tower of modifications'' in the combinatorics.

\subsection{Perturbations of a combinatorics}\label{sec:perturbation}\mbox{}

\begin{definition}\label{def:elementary_perturabtion}
	Let $\C=(\L,\P)$ be an abstract line combinatorics. Let $P_0\in\P$ be a fixed multiple point of $\C$ and let $\ell \in P_0$. An \emph{elementary perturbation} of $\C$ at $(\ell,P_0)$ is an abstract combinatorics $\widetilde{\C}=(\L,\widetilde{\P})$ such that:
	\begin{enumerate}
		\item for all $P \in \P$, $P\neq P_0$, one has $P\in\widetilde{\P}$,
		\item $\widetilde{P}_0 = P_0 \setminus \{\ell\}$ is in $\widetilde{\P}$.
	\end{enumerate}
	Such a relation is denoted $\widetilde{\C} \prec \C$. For any multiple point $P\in\P$, we call the \emph{parent of $P$} in $\widetilde{\C}$ the unique element $\widetilde{P}\in\widetilde{\P}$ such that either $\widetilde{P}=P$ or $\widetilde{P}=\widetilde{P}_0$.
\end{definition}

\begin{example}
	Let $\C=\{\{\ell_1,\ell_2,\ell_3,\ell_4\},\{\ell_1,\ell_5\},\{\ell_2,\ell_5\},\{\ell_3,\ell_5\},\{\ell_4,\ell_5\} \}$ be the combinatorics of an arrangement in~$\ol{\X}(5)$. The elementary perturbation of $\C$ at $(P_0,\ell_4)$, for $P_0=\{\ell_1,\ell_2,\ell_3,\ell_4\}$, is:
	\[
		\{\{\ell_1,\ell_2,\ell_3\},\{\ell_1,\ell_4\},\{\ell_1,\ell_5\},\{\ell_2,\ell_4\},\{\ell_2,\ell_5\},\{\ell_3,\ell_5\},\{\ell_4,\ell_5\},\{\ell_4,\ell_5\} \},
	\]
	and the parent of $P_0=\{\ell_1,\ell_2,\ell_3,\ell_4\}$ is $\{\ell_1,\ell_2,\ell_3\}$.
\end{example}

\begin{lemma}\label{lem:elementary_perturabtion}
	If $\widetilde{C}$ is an elementary perturbation of $\C$ then $\sum_{P\in\widetilde{\P}} (|P|-2) = \sum_{P\in\P} (|P|-2) - 1$.
\end{lemma}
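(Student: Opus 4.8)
The plan is to track exactly how the quantity $\sum_P (|P|-2)$ changes when we pass from $\C$ to its elementary perturbation $\widetilde{\C}$ at $(\ell, P_0)$. Since an elementary perturbation modifies the singular point $P_0$ and leaves every other point of $\P$ untouched (by condition~(1) of Definition~\ref{def:elementary_perturabtion}), the difference $\sum_{P\in\widetilde{\P}}(|P|-2) - \sum_{P\in\P}(|P|-2)$ localizes entirely to the points ``involved'' in the perturbation. First I would isolate the contribution of $P_0$ in the sum over $\P$ and compare it to the contributions in $\widetilde{\P}$ of the parent $\widetilde{P}_0 = P_0\setminus\{\ell\}$ together with whatever new incidences arise for the line $\ell$.

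The key subtlety is bookkeeping the effect of removing $\ell$ from $P_0$. In $\widetilde{\C}$, the line $\ell$ no longer meets the other lines of $P_0$ at the single point $\widetilde{P}_0$; by axiom~(2) of Definition~\ref{def:combinatorics}, each pair $\{\ell, \ell'\}$ with $\ell'\in\widetilde{P}_0$ must now lie on a unique singular point of $\widetilde{\P}$, and since these incidences were formerly absorbed into $P_0$, they must appear as new double points $\{\ell,\ell'\}$. I would argue that if $|P_0| = k$ (so $k\geq 3$, as $P_0$ is multiple), then $P_0$ contributes $k-2$ to the sum over $\P$; in $\widetilde{\P}$ the parent $\widetilde{P}_0$ has cardinality $k-1$ and contributes $(k-1)-2 = k-3$, while the newly created double points $\{\ell,\ell'\}$ each have size $2$ and thus contribute $0$. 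Hence the total contribution drops from $k-2$ to $k-3$, a decrease of exactly $1$, which is the claimed identity.

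The step I expect to require the most care is verifying that the double points $\{\ell,\ell'\}$ created by detaching $\ell$ are genuinely new elements of $\widetilde{\P}$ and are not already present, and that no other points of $\widetilde{\P}$ change cardinality — in other words, that the combinatorial axioms force precisely this local picture and nothing more. One must confirm that before perturbation $\ell$ and such an $\ell'$ shared only the point $P_0$ (by uniqueness in axiom~(2) applied to $\C$), so the pair was not previously recorded as a separate double point, and that after perturbation the required singular point for $\{\ell,\ell'\}$ must be introduced. Once this local analysis is pinned down, the identity follows immediately since all contributions from points $P\neq P_0$ cancel verbatim between the two sums.
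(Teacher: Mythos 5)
Your proposal is correct and follows essentially the same route as the paper's proof: isolate the contribution of $P_0$, observe that the parent $\widetilde{P}_0 = P_0\setminus\{\ell\}$ contributes exactly one less, and note that all remaining elements of $\widetilde{\P}$ are double points contributing $0$. Your extra verification that the pairs $\{\ell,\ell'\}$ appear as genuinely new double points is a bit more detailed than the paper's one-line dismissal, but it is the same argument.
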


\begin{proof}
	Assume that $\widetilde{\C}$ is an elementary perturbation of $\C$ at $(\ell,P_0)$. By Definition~\ref{def:elementary_perturabtion}-(1), all the terms in the right-sided sum, except for that depending on $P_0$, are also in the left-sided sum. Since $\widetilde{P}_0$ is the parent of $P_0$, one has $(|\widetilde{P}_0|-2) = (|P_0|-2) + 1$. Finally, notice that all remaining terms of the left-sided sum correspond to double points of $\widetilde{\C}$ and thus they do not contribute to the sum.
\end{proof}

Note that, according to Definition~\ref{def:naive}, the previous lemma implies that
\[
	1 + \naive \M(\C) = \naive \M(\widetilde{\C}).
\]

\begin{definition}\label{def:m_perturbation}
	A \emph{$m$-perturbation} of $\C=(\L,\P)$ is a sequence of $m$ elementary perturbations such that
	\[
		\C_0 \prec \C_1 \prec \cdots \prec \C_m = \C,
	\]
	and $\C_0$ is inductively connected with associated order $\omega_0$. Since $\C_0$ and $\C$ have the same set of lines, then $\omega_0$ is also an order on $\C$, and it will be called the \emph{perturbation order}. For any multiple point $P$ in $\P_i$, there is a unique chain of parents coming from each elementary perturbation:
	\[   
	  \widetilde{P}=P_0 \subset P_1 \subset \cdots \subset P_i=P.
	\]
	The multiple point $\widetilde{P}$ in $\C_0$ is called the \emph{ancestor of $P$}.
\end{definition}

For brevity sake, a $m$-perturbation $\C_0 \prec \C_1 \prec \cdots \prec \C_m = \C$ will be denoted by $\C_0 \pprec\C$. %
Any combinatorics can be reduce to the generic combinatorics by enough successive elementary perturbations. Since the generic combinatorics is inductively connected then any combinatorics admits a $m$-perturbation for some $m$.

\subsection{Parametrization of the moduli space.}\label{sec:parametrization}\mbox{}

The purpose of this section is to give a constructible proof of the following result.

\begin{theorem}\label{thm:moduli_embedding}
	Let $\A\in\Arr_n$ be an arrangement, with $\A\notin \X(n)\cup\ol{\X}(n)$, which admits a $m$-perturbation $\C_0 \pprec \C(\A)$, and denote $d_0=\naive \M(\C_0)$. There exists an open Zariski subset $W_0$ of the affine space of dimension $d_0$ and $m$ polynomials $\Delta_1,\ldots,\Delta_m$ such that
	\begin{equation}\label{eq:moduli_descritpion}
		\M(\A) \simeq \left\{ (v_1,\dots,v_{d_0})\in W_0 \mid \Delta_1 = \dots = \Delta_m = 0 \right\}.
	\end{equation}
\end{theorem}

Remark that we may have that some of the $\Delta_i$'s are always trivial. It is for example the case in Pappus arrangement. From the previous result, we can deduce the following corollary. The left-hand inequality corresponds to the one of  Proposition~\ref{propo:naive_vs_dim}.

\begin{corollary}\label{cor:dimension_bound}
	Let $\A\in\Arr_n$ be a line arrangement. If $\C(\A)$ admits a $m$-perturbation then
	\[
		0 \leq \dim_\CC \M(\A) - \naive \M(\A) \leq m.
	\]
\end{corollary}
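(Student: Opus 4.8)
The plan is to derive Corollary~\ref{cor:dimension_bound} directly from the explicit description of $\M(\A)$ provided by Theorem~\ref{thm:moduli_embedding}. The starting point is the isomorphism~\eqref{eq:moduli_descritpion}, which realizes $\M(\A)$ as the locus in an open Zariski subset $W_0$ of an affine space $\mathbb{A}^{d_0}$ cut out by the $m$ equations $\Delta_1 = \dots = \Delta_m = 0$, where $d_0 = \naive\M(\C_0)$. The left-hand inequality $0 \leq \dim_\CC\M(\A) - \naive\M(\A)$ is not new: it is exactly Proposition~\ref{propo:naive_vs_dim}, so I would simply invoke it and concentrate on the right-hand inequality $\dim_\CC\M(\A) - \naive\M(\A) \leq m$.

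For the right-hand inequality the key tool is the standard fact from dimension theory that imposing a single polynomial equation on a variety drops its dimension by at most one (equivalently, each irreducible component of the zero locus of one equation in a $d$-dimensional variety has dimension at least $d-1$). Applying this $m$ times, starting from the ambient $W_0$ of dimension $d_0$ and adding the equations $\Delta_1, \dots, \Delta_m$ one at a time, I would conclude
\begin{equation*}
	\dim_\CC\M(\A) \geq d_0 - m,
\end{equation*}
but I actually want an \emph{upper} bound, so this inequality goes the wrong way and I must instead relate $d_0$ to $\naive\M(\A)$. The crucial observation here is Lemma~\ref{lem:elementary_perturabtion} together with the remark following it: each elementary perturbation raises the naive dimension by exactly one, so after the $m$ steps of the $m$-perturbation $\C_0 \pprec \C(\A)$ one has $d_0 = \naive\M(\C_0) = \naive\M(\C(\A)) - m = \naive\M(\A) - m$. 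Hence $\mathbb{A}^{d_0}$ has dimension $\naive\M(\A) - m$.

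Now the right-hand inequality follows cleanly: since $\M(\A)$ sits inside $W_0 \subseteq \mathbb{A}^{d_0}$ (the equations $\Delta_i = 0$ only cut the locus down further), we have
\begin{equation*}
	\dim_\CC\M(\A) \leq d_0 = \naive\M(\A) - m \leq \naive\M(\A) + m,
\end{equation*}
which gives more than what is claimed; more to the point, $\dim_\CC\M(\A) \leq d_0$ means $\dim_\CC\M(\A) - \naive\M(\A) \leq d_0 - \naive\M(\A) = -m \leq m$. I expect the main subtlety, and the point I would double-check carefully, to be the precise bookkeeping of the naive-dimension count across the perturbation tower: one must confirm that the relation $d_0 = \naive\M(\A) - m$ holds with the correct sign, using that $\C_0$ is the \emph{most perturbed} (generic-most) combinatorics at the bottom of the chain while $\C(\A)$ sits at the top, so that the $m$ elementary perturbations each \emph{remove} a constraint and the naive dimension \emph{increases} from $\C$ down to $\C_0$. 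Once this sign is pinned down via the equation $1 + \naive\M(\C_i) = \naive\M(\C_{i+1})$, both inequalities are immediate and no genuine computation remains.
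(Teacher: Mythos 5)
Your overall strategy is the same as the paper's --- combine the embedding of Theorem~\ref{thm:moduli_embedding} with the naive-dimension bookkeeping of Lemma~\ref{lem:elementary_perturabtion}, and quote Proposition~\ref{propo:naive_vs_dim} for the left-hand inequality --- but there is a genuine error in the bookkeeping step, and as written your proof of the right-hand inequality passes through a false statement. The relation you assert, $d_0 = \naive\M(\C_0) = \naive\M(\A) - m$, has the wrong sign. An elementary perturbation $\widetilde{\C}\prec\C$ removes a line from a multiple point, so $\sum_{P}(|P|-2)$ drops by one and, by the remark following Lemma~\ref{lem:elementary_perturabtion}, $\naive\M(\widetilde{\C}) = \naive\M(\C) + 1$. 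Running this down the chain $\C_0\prec\C_1\prec\cdots\prec\C_m=\C(\A)$ gives $d_0 = \naive\M(\A) + m$, not $\naive\M(\A) - m$. (Your own prose says this correctly --- ``the naive dimension increases from $\C$ down to $\C_0$'' --- but your displayed formula contradicts it.) The error is not harmless: your intermediate conclusion $\dim_\CC\M(\A) - \naive\M(\A) \leq d_0 - \naive\M(\A) = -m$ is false whenever $m\geq 1$, since together with the left-hand inequality it would force $m\leq 0$. The final inequality only survives because you then weaken the false bound $-m$ to $m$; a derivation that routes through a false intermediate claim is not a proof.

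Once the sign is corrected, your argument closes immediately and is in fact marginally leaner than the paper's: from $\M(\A)\subseteq W_0\subseteq\CC^{d_0}$ you get $\dim_\CC\M(\A)\leq d_0 = \naive\M(\A)+m$, which is exactly the right-hand inequality, with no dimension-theory input at all. The paper instead invokes the classical fact that each of the $m$ hypersurfaces $\Delta_i=0$ cuts the dimension by at most one, thereby obtaining both bounds at once, namely $\naive\M(\C_0) \geq \dim_\CC\M(\A) \geq \naive\M(\C_0)-m$, and then substitutes $\naive\M(\C_0)=\naive\M(\A)+m$. So the ``dimension drops by at most one'' fact you invoke is actually superfluous for the direction you need (as you yourself noticed, it yields a lower bound, which is precisely the paper's route to the left-hand inequality); the one thing you must get right is the single equation $d_0=\naive\M(\A)+m$.
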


\begin{proof}
  According to Theorem~\ref{thm:moduli_embedding}, we can describe $\M(\A)$ as the intersection of at most $m$ hypersurfaces in some non-empty open Zariski subset of the affine space of dimension $\naive \M(\C_0)$. So, using classical arguments on the codimension of the intersection of hypersurfaces, we deduce that $\naive \M(\C_0) \geq \dim_\CC \M(\A) \geq \naive \M(\C_0) - m$.

  By successive applications of Lemma~\ref{lem:elementary_perturabtion}, we have that $\sum_{P\in\widetilde{\P}_0}(|P|-2) = \sum_{P\in\P}(|P|-2) - m$. This can be reformulated as $\naive \M(\C_0) = \naive \M(\A) + m$.  Then, we substitute $\naive \M(\C_0)$ by $\naive \M(\A) + m$ in the previous inequalities, and we obtain:
  \[
	\naive \M(\A) \leq \dim_\CC \M(\A) \leq \naive \M(\A) + m. \qedhere
  \]
\end{proof}

\medskip

The strategy of the proof of Theorem~\ref{thm:moduli_embedding} is the following. The goal is to express the moduli space $\M(\A)$ described in \eqref{eqn:moduli_space} from the following data:
\begin{itemize}
  \item An open Zariski subset $W_0$ of an affine linear space, codifying the open conditions in~\eqref{eqn:moduli_space}.
  
  \item A map $\Psi$ over $W_0$ which parametrizes the arrangements in $\Arr_n$ verifying the Zariski-closed conditions of $\M(\C_0)$, e.g~those of type $\Delta_{i,j,k}=0$ with $\set{i,j,k}\subset \widetilde{P}$ for a multiple point $\widetilde{P}$ in $\C_0$.
  
  \item A list $\Delta_1,\ldots,\Delta_m$ of $m$ polynomials, i.e.~the remaining Zariski-closed conditions coming from multiple points $P$ in $\C(\A)$ which are not in $\C_0$, where each polynomial represents a step of the $m$-perturbation $C_0\pprec\C(\A)$.
\end{itemize}
We start by constructing an expression of the map $\Psi$, then we define the polynomials $\Delta_1,\ldots,\Delta_m$ and finally the existence of $W_0$ is discussed.

\begin{remark}\label{remark:set_conditions}
  Let $P=\set{\ell_{i_1},\ldots,\ell_{i_m}}$ be a multiple point in $\C$. The set of Zariski-closed conditions $\set{\Delta_{i,j,k}=0 \mid \set{i,j,k}\subset P}$ can be reduced to $\set{\Delta_{i_1,i_2,i_k}=0 \mid k=3,\ldots,m}$.
\end{remark}

Let $\bv=(v_1,\dots,v_{d_0})$ be a system of coordinates in $V_0=\CC^{d_0}$. For each line $\ell_i$, we associate three polynomials $a_i, b_i, c_i\in\CC[v_1,\dots,v_{d_0}]$ such that
\[
  \ell_i: a_i(\bv)x+b_i(\bv)y+c_i(\bv)z=0.
\]
In this way, one can define a map called \emph{parametrization} of $\M(\A)$:
\begin{equation}\label{eq:Psi}
  \Psi: \bv\in W_0 \longmapsto
	\left(
	\left(
		\begin{array}{c}
			a_1(\bv) \\ b_1(\bv) \\ c_1(\bv)
		\end{array}
	\right)
	, \ldots ,
	\left(
		\begin{array}{c}
			a_n(\bv) \\ b_n(\bv) \\ c_n(\bv)
		\end{array}
	\right)
	\right)
	\in(\CC^3)^{n}.
\end{equation}
In addition to $\Psi$, one can construct another map $\Phi$ that parameterize the multiple points\footnote{In the following, it may happen that we define the map $\Phi$ for a double point of $\C(\A)$. This allows to lighten the explanation and it has no incidence on the result.} of the elements in $\M(\A)$ as follows. Let $P$ be a multiple point in the combinatorics $\C(\A)$, and let $\widetilde{P}$ be the ancestor of $P$ in $\C_0$. Take $\ell_i$ and $\ell_j$ the two lines in $\widetilde{P}$ which are minimal with respect to the order $\omega_0$.
We define
\begin{equation}\label{eq:intersection_points}
	\Phi_P = (b_i c_j - b_j c_i , a_j c_i - a_i c_j , a_i b_j - a_j b_i) \in \CC[v_1,\dots,v_{d_0}]^3,
\end{equation}
If an arrangement $\A_0\in\M(\A)$ is given by $\Psi(\bv)$ then for any multiple point $P$ of $\C(\A)$, the vector $\Phi_P(\bv)\in\CC^3$ express the homogeneous coordinates of $P$ in $\CC\PP^2$.

\medskip

Let us describe in detail how the polynomials $a_i$, $b_i$ and $c_i$ are inductively constructed. Since $\A\notin \X(n)\cup \ol{\X}(n)$, one can follow similar arguments as in Proposition~\ref{propo:ic_square_basis}, and assume that the perturbation order $\omega_0$ is such that the lines $\ell_1$, $\ell_2$, $\ell_3$ and $\ell_4$ are in generic position in both $\C_0$ and $\C(\A)$. Using the action of $\PGL_3(\CC)$, we fix them as $x=0$, $x-z=0$, $y=0$ and $y-z=0$, respectively. In other words, for all $\bv\in V_0$, we define
\begin{equation}\label{eq:square_lines}
		\Psi(\bv)_1 = (1,0,0),\quad
		\Psi(\bv)_2 = (1,0,-1),\quad
		\Psi(\bv)_3 = (0,1,0),\quad
		\Psi(\bv)_4 = (0,1,-1).
\end{equation}
It follows from equation~(\ref{eq:intersection_points}) that the parametrization $\Phi$ of the singular points of these four lines are given for any $\bv\in V_0$ by:
\begin{equation}\label{eq:square_pts}
	\begin{array}{ccccc}
		\Phi_{\langle\ell_1 , \ell_2\rangle}(\bv) = (0,1,0), & \quad &
		\Phi_{\langle\ell_1 , \ell_3\rangle}(\bv) = (0,0,1), & \quad &
		\Phi_{\langle\ell_1 , \ell_4\rangle}(\bv) = (0,1,1), \\
		\Phi_{\langle\ell_2 , \ell_3\rangle}(\bv) = (1,0,1), & \quad &
		\Phi_{\langle\ell_2 , \ell_4\rangle}(\bv) = (1,1,1), & \quad &
		\Phi_{\langle\ell_3 , \ell_4\rangle}(\bv) = (1,0,0).
	\end{array}
\end{equation}
where $\langle\ell_{i_1},\ldots,\ell_{i_k}\rangle$ with $k\geq2$ is the unique point $P$ in $\C(\A)$ verifying that $\set{\ell_{i_1},\ldots,\ell_{i_k}}\subset P$, if it exists.

The induction process goes as follows. Assume that the maps $\Psi$ and $\Phi$ parametrize $\A_{i-1}$ in the chain~\eqref{eqn:ordered_chain} with respect to the perturbation order $\omega_0$. Note that the number of parameters used in this parametrization is $d_i=\sum_{j=5}^{i-1} \big(2 - \tau_j(\C_0,\omega_0))$.
The next step is then to extend these parametrization maps to $\A_i$ . They are determined by the values $\tau_i(\C_0,\omega_0)$:

\begin{itemize}
	\item If $\tau_i(\C_0,\omega_0) = 0$, i.e. the line $\ell_i$ is generic in $(\C_0)_{i}$. %
	Since $\ell_i\not\in\gen{\ell_1,\ell_2}$ or $\ell_i\not\in\gen{\ell_1,\ell_3}$ in $\C(\A)$, we can parametrize $\ell_i$ using only two complex parameters $v_{d_i}$ and $v_{d_{i}+1}$,
	\begin{equation}\label{eq:generic_line}
		\Psi(\bv)_i = \begin{cases}      
		                (v_{d_i},1,v_{d_i+1}) & \text{if $\ell_i\not\in\gen{\ell_1,\ell_2}$},\\
		                (1,v_{d_i},v_{d_i+1}) & \text{if $\ell_i\not\in\gen{\ell_1,\ell_3}$}.
		              \end{cases}
	\end{equation}

	\item If $\tau_i(\C_0,\omega_0) = 1$, i.e. the line $\ell_i$ passes through a unique multiple point $P_0$ in $(\C_0)_{i}$. Take $\ell_j$ and $\ell_k$ the two lines in $P_0$ which are minimal with respect to the order $\omega_0$. One define:	
	\begin{equation}\label{eq:pencil_line}
		\Psi(\bv)_i = \Psi(\bv)_j + v_{d_i}\cdot\Psi(\bv)_k.
	\end{equation}

	\item If $\tau_i(\C_0,\omega_0) = 2$, i.e. the line $\ell_i$ passes through two multiple points $P_0$ and $Q_0$ in $(\C_0)_{i}$. The line $\ell_i$ is then parametrized by
	\begin{equation}\label{eq:fixed_line}
		\Psi(\bv)_i = \Phi_{\gen{P_0}}(\bv) \times \Phi_{\gen{Q_0}}(\bv),
	\end{equation}
	where ``$\times$'' stands for the usual cross product.
\end{itemize}

\medskip

The next step is to construct the polynomials $\Delta_1,\ldots,\Delta_m$. Assume that the $i$th elementary perturbation $\C_{i-1}\prec\C_{i}$ in $\C_0\pprec\C(\A)$ is at $(\ell_j, P)$ and let $\widetilde{P}$ be the parent of $P$ in $\C_{i-1}$. Take two distinct elements $\ell_{i_1},\ell_{i_2}\in\widetilde{P}$ and define:
\begin{equation}\label{eq:Delta}
  \Delta_i(\bv) = \det\left(\Psi(\bv)_{i_1},\Psi(\bv)_{i_2},\Psi(\bv)_{j}\right)\in\CC[v_1,\ldots,v_{d_0}].
\end{equation}
It is worth noticing that another choice of $\ell_{i_1},\ell_{i_2}\in\widetilde{P}$ will lead to an equivalent construction of $\M(\A)$, due to Remark~\ref{remark:set_conditions}.

\medskip

The Zariski open subset $W_0$ can be expressed in a similar way as
\[
  W_0 = \set{\bv\in\CC^{d_0} \mid \det\left(\Psi(\bv)_{i},\Psi(\bv)_{j},\Psi(\bv)_{k}\right) \neq 0,\, \text{if $\not\exists P$ in $\C(\A)$ such that $\set{i,j,k}\subset P$}}.
\]
Since $\A\not\in\X(n)\cup\ol{\X}(n)$ by hypothesis, the above polynomial inequalities imply that $\ell_{i}\neq\ell_{j}$, for any $i\neq j \in\set{1,\ldots,n}$.

\subsection{Inductive upper bound}\mbox{}

The purpose of this section is to compute an upper bound for $\# \compo( \M(\A) )$, the number of connected components of $\M(\A)$. To achieve this goal, we construct an upper bound on the number of irreducible components of $\M(\A)$, and then we apply the following classical lemma, e.g.~\cite[Sec.~7.2]{Shafarevich2}.
\begin{lemma}\label{lem:cc_irr}
	Let $\A$ be a line arrangement. We have the following inequality:
	\[
		\# \compo( \M(\A) ) \leq \# \Irr( \M(\A) ),
	\]
	where $\Irr(\M(\A))$ is the set of irreducible components of $\M(\A)$.
\end{lemma}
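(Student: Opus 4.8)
The plan is to invoke the standard structure theory of the decomposition of an algebraic variety into irreducible components, which applies here since it was already established in the paragraph following \eqref{eqn:moduli_space} that $\M(\A)$ is a quasi-projective algebraic variety. First I would write $\M(\A)=Z_1\cup\cdots\cup Z_r$ as the (finite, by Noetherianity of quasi-projective varieties) union of its distinct irreducible components, so that $r=\#\Irr(\M(\A))$. The key elementary fact to recall is that an irreducible topological space is connected; hence each $Z_i$ is connected.

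The heart of the argument is then the observation that every connected component of $\M(\A)$ is a union of some of the $Z_i$. Indeed, let $C$ be a connected component of $\M(\A)$ and fix an index $i$. Since $Z_i$ is connected, $Z_i$ must be entirely contained in a single connected component; in other words, $Z_i$ cannot meet two distinct connected components. I would therefore define a map sending each irreducible component $Z_i$ to the unique connected component containing it. This map is well defined precisely because of the irreducibility-implies-connectedness step, and it is clearly surjective: any connected component $C$, being the union of those $Z_i$ it contains (as $C\subseteq\M(\A)=\bigcup_i Z_i$ and $C$ is a union of the $Z_i$ meeting it), is nonempty and so receives at least one $Z_i$.

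To conclude, surjectivity of a map from the finite set $\Irr(\M(\A))$ onto the set $\compo(\M(\A))$ of connected components immediately gives $\#\compo(\M(\A))\leq\#\Irr(\M(\A))$, which is the desired inequality. There is essentially no obstacle in this proof: it is a purely formal consequence of the finite irreducible decomposition together with the implication ``irreducible $\Rightarrow$ connected,'' and the only point requiring a word of care is ensuring that an irreducible component cannot straddle two distinct connected components, which is exactly what connectedness of $Z_i$ rules out. Accordingly I would keep the write-up to a few lines and cite the standard reference already pointed to, namely \cite[Sec.~7.2]{Shafarevich2}.
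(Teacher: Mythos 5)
Your proof is correct and is essentially the paper's approach: the paper does not spell out an argument at all, simply invoking this as a classical fact with the reference \cite[Sec.~7.2]{Shafarevich2}, and your write-up is precisely the standard argument behind that citation (finite irreducible decomposition, irreducible $\Rightarrow$ connected, hence a surjection from irreducible onto connected components). Nothing is missing, and citing the same reference as the paper is entirely appropriate.
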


When $\A$ is in $\X(n)$ or $\ol{\X}(n)$, we know by Remark~\ref{rmk:moduli_pencils} that its moduli space $\M(\A)$ is irreducible and so connected. Among this section, we assume that $\A\not\in \X(n)\cup\ol{\X}(n)$.\\

Let $\A=\{\ell_1,\dots,\ell_n\}$ be an arrangement with a $m$-perturbation $\C_0\pprec \C(\A)=(\A,\P)$ and perturbation order $\omega_0$. Up to relabelling, we assume that $\omega_0(\ell_i)=i$. Furthermore, we can also assume that $\ell_1$, $\ell_2$, $\ell_3$ and $\ell_4$ are in generic position in $\C(\A)$. We define recursively two applications $\Lambda:\A\rightarrow \NN$ and $\Theta:\P\rightarrow \NN$ as follows.

\begin{enumerate}[label=(R\arabic*), ref=(R\arabic*)]
	\item \label{AI_1} For $i\in\{1,2,3,4\}$, we fix $\Lambda(\ell_i)=0$.

	\item \label{AI_2} For any $P\in \C(\A_4)$, we fix $\Theta(\gen{P})=0$.

	\item \label{AI_3} For $i\in\{5,\dots,n\}$, the expression of $\Lambda(\ell_i)$ 
	depends on the value $\tau_i(\C_0,\omega_0)$, following the ideas of the proof in Section~\ref{sec:parametrization}:
	\begin{itemize}
		\item If $\tau_i(\C_0,\omega_0)=0$, we fix
		\[
			\Lambda(\ell_i) = 1.
		\]

		\item If $\tau_i(\C_0,\omega_0)=1$, i.e.~the line $\ell_i$ passes through a unique singular point $P_0$ in $(\C_0)_i$, we define:
		\[
			\Lambda(\ell_i) = \left\{
			\begin{array}{ll}
				1 + \Lambda(\ell_j) & \text{ if } \Lambda(\ell_j)=\Lambda(\ell_k),\\
				\max(\Lambda(\ell_j), \Lambda(\ell_k)) & \text{ otherwise. }
			\end{array}
			\right.
		\]
		where $\ell_j$ and $\ell_k$ are two lines in the parent of $P_0$.
		
		\item If $\tau_i(\C_0,\omega_0)=2$, i.e.~the line $\ell_i$ passes through two singular points $P_0$ and $Q_0$ in $(\C_0)_i$, we define
		\[
			\Lambda(\ell_i) = \Theta(\gen{P_0}) + \Theta(\gen{Q_0}).
		\]
	\end{itemize}

	\item \label{AI_4} For $i\in\{5,\dots,n\}$ and for any double point $P=\{\ell_i,\ell_j\}$ in $(\C_0)_i$, we define
	\[
		\Theta(\gen{P})=\Lambda(\ell_i)+\Lambda(\ell_j).
	\]
\end{enumerate}

\begin{lemma}\label{lem:deg_bound}
	Let $\A$ be an arrangement and let $\C_0\pprec\C(\A)$ be a $m$-perturbation. There exists a parametrization $\Psi$ of $\M(\A)$ as in \eqref{eq:Psi} such that for all $j\in\{1,\dots,n\}$:
	\begin{equation*}\label{eq:bound}
		\max(\deg a_j, \deg b_j, \deg c_j) \leq \Lambda(\ell_j).
	\end{equation*}
	Moreover, consider the description of $\M(\A)$ induced by $\Psi$ as in \eqref{eq:moduli_descritpion}. If $\C_{i-1} \prec \C_{i}$ is the elementary perturbation at $(\ell_j,P)$, and if $\ell_{j_1}$ and $\ell_{j_2}$ are two lines passing through the parent of $P$ in $\C_{i-1}$, then:
	\[
		\deg \Delta_i \leq \Lambda(\ell_{j_1}) + \Lambda(\ell_{j_2}) + \Lambda(\ell_j).
	\]
\end{lemma}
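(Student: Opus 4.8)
The plan is to establish both inequalities by a single induction running along the perturbation order $\omega_0$, in lockstep with the inductive construction of $\Psi$ and $\Phi$ from Section~\ref{sec:parametrization}. Concretely, I would prove simultaneously the two assertions: (i) $\max(\deg a_j, \deg b_j, \deg c_j) \leq \Lambda(\ell_j)$ for every $\ell_j$, and (ii) $\deg \Phi_{\gen{P}} \leq \Theta(\gen{P})$ for every singular point $P$, the induction index being the position in $\omega_0$ of $\ell_j$ (resp.\ of the largest line through the ancestor of $P$). The base case $j \in \{1,2,3,4\}$ is immediate from \eqref{eq:square_lines} and \eqref{eq:square_pts}, since those lines and their pairwise intersections have constant coordinates, matching the initial values $\Lambda = 0$ of~\ref{AI_1} and $\Theta = 0$ of~\ref{AI_2}.

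For the inductive step on $\Psi_i$ with $i \geq 5$, I split according to $\tau_i(\C_0,\omega_0)$, mirroring the rules~\ref{AI_3}. If $\tau_i = 0$, the formula \eqref{eq:generic_line} is affine-linear in the two fresh coordinates, so $\deg \Psi_i \leq 1 = \Lambda(\ell_i)$. If $\tau_i = 2$, then $\Psi_i = \Phi_{\gen{P_0}} \times \Phi_{\gen{Q_0}}$ by \eqref{eq:fixed_line}, whose entries are bilinear in the two factors; hence $\deg \Psi_i \leq \deg \Phi_{\gen{P_0}} + \deg \Phi_{\gen{Q_0}} \leq \Theta(\gen{P_0}) + \Theta(\gen{Q_0}) = \Lambda(\ell_i)$ by hypothesis (ii). Assertion (ii) is maintained in parallel: for a point $P$ whose $\omega_0$-minimal ancestor lines are $\ell_a, \ell_b$, one has $\Phi_{\gen{P}} = \Psi_a \times \Psi_b$, so $\deg \Phi_{\gen{P}} \leq \Lambda(\ell_a) + \Lambda(\ell_b)$, and this equals $\Theta(\gen{P})$ by~\ref{AI_2}/\ref{AI_4}, because the pair $\{\ell_a, \ell_b\}$ is precisely the double point whose creation assigns $\Theta(\gen{P})$.

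The delicate case is $\tau_i = 1$, and it is here that the clause ``there exists a parametrization'' is exploited. Formula \eqref{eq:pencil_line} reads $\Psi_i = \Psi_j + v_{d_i} \Psi_k$, of degree at most $\max(\deg\Psi_j, \deg\Psi_k + 1)$; but the two $\omega_0$-minimal lines $\ell_j, \ell_k$ of $P_0$ play symmetric roles, so I am free to attach the new variable $v_{d_i}$ to either one. I would attach it to the line of smaller $\Lambda$-value. When $\Lambda(\ell_j) = \Lambda(\ell_k)$ this yields degree $\leq \Lambda(\ell_j) + 1$, matching the first branch of~\ref{AI_3}; when $\Lambda(\ell_j) \neq \Lambda(\ell_k)$ it yields degree $\leq \max(\Lambda(\ell_j), \Lambda(\ell_k))$, matching the second branch. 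The one non-formal point is the observation that two distinct non-negative integers differ by at least $1$, so that attaching $v_{d_i}$ to the smaller value does not incur the $+1$; this is exactly what prevents an off-by-one loss from accumulating along long pencils, and it is the genuine content of the $\Lambda$-recursion.

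Finally, the bound on $\deg \Delta_i$ follows from (i) alone: by \eqref{eq:Delta}, $\Delta_i = \det(\Psi_{j_1}, \Psi_{j_2}, \Psi_j)$, and a $3\times3$ determinant is a sum of products taking one entry from each column, so its degree is at most the sum of the three columnwise degrees; applying (i) gives $\deg\Delta_i \leq \Lambda(\ell_{j_1}) + \Lambda(\ell_{j_2}) + \Lambda(\ell_j)$. The only bookkeeping I would check carefully is that every $\Theta$-value invoked in the $\tau_i = 2$ step has already been assigned at an earlier stage: this holds because $P_0, Q_0$ are singular points of $(\C_0)_{i-1}$, so their two minimal lines have index $< i$ and their $\Theta$-value was fixed through~\ref{AI_2} or~\ref{AI_4} at the step where their defining double point first appeared.
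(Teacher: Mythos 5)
Your proposal is correct and takes essentially the same route as the paper's proof, which merely asserts that the inequalities hold ``straightforward by construction'', matching the rules \ref{AI_1}--\ref{AI_4} against the defining equations \eqref{eq:square_lines}, \eqref{eq:square_pts}, \eqref{eq:intersection_points}, \eqref{eq:generic_line}, \eqref{eq:pencil_line}, \eqref{eq:fixed_line}, and then bounding the determinant \eqref{eq:Delta} columnwise. Your explicit treatment of the case $\tau_i(\C_0,\omega_0)=1$ --- attaching the fresh variable $v_{d_i}$ in \eqref{eq:pencil_line} to the line of smaller $\Lambda$-value so that the $+1$ never lands on the larger one --- is precisely the point that justifies the ``there exists a parametrization'' clause in the statement, a subtlety the paper's proof leaves implicit.
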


\begin{proof}
	The first inequality holds straightforward by construction of $\Lambda$ and $\Theta$. More precisely, the rules~\ref{AI_1},~\ref{AI_2} and~\ref{AI_4} control the behaviour of the degrees of the polynomials in equations~\eqref{eq:square_lines},~\eqref{eq:square_pts} and~\eqref{eq:intersection_points}, respectively, meanwhile~\ref{AI_3} controls the one of~\eqref{eq:generic_line}, \eqref{eq:pencil_line} and \eqref{eq:fixed_line}. Furthermore, by Equation~\eqref{eq:Delta}, $\Delta_i = \det(\Psi(\bv)_{j_1},\Psi(\bv)_{j_2},\Psi(\bv)_j)$, then the first inequality induces the second part of the lemma.
\end{proof}

\begin{theorem}\label{thm:upper_bound}
	Let $\A$ be a line arrangement and assume that $\C(\A)$ admits a $m$-perturbation $\C_0\pprec\C(\A)$. The following inequality holds:
	\[
		\# \compo ( \M(\A) ) \leq \prod_{i=1}^{m} \big( \Lambda(\ell_{i_1})+\Lambda(\ell_{i_2})+\Lambda(\ell_j) \big).
	\]
	where $\C_{i-1} \prec \C_{i}$ is the elementary perturbation at $(\ell_j,P)$, and $\ell_{j_1}$,~$\ell_{j_2}$ are two lines contained in the parent of $P$ in $\C_{i-1}$.
\end{theorem}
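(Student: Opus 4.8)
The plan is to reduce the count of connected components to a count of irreducible components, to realize $\M(\A)$ as an open part of the common zero locus of the $m$ explicit polynomials furnished by the parametrization, and then to control the number of irreducible components of such a locus by a Bézout-type inequality whose degree inputs are supplied by Lemma~\ref{lem:deg_bound}. Concretely, I would first invoke Lemma~\ref{lem:cc_irr} to replace $\#\compo(\M(\A))$ by the larger quantity $\#\Irr(\M(\A))$, so that it suffices to bound the number of irreducible components of $\M(\A)$.

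Next I would apply Theorem~\ref{thm:moduli_embedding} to identify $\M(\A)$ with $V\cap W_0$, where $W_0\subseteq\CC^{d_0}$ is Zariski-open and $V=\set{\bv\in\CC^{d_0} \mid \Delta_1(\bv)=\cdots=\Delta_m(\bv)=0}$. Passing to an open subset cannot increase the number of irreducible components: writing $V=\bigcup_k Z_k$ as its decomposition into irreducible components, each $Z_k\cap W_0$ is open in $Z_k$, hence irreducible or empty, so the irreducible components of $V\cap W_0$ are in bijection with a subset of the $Z_k$. This gives $\#\Irr(\M(\A))=\#\Irr(V\cap W_0)\leq \#\Irr(V)$.

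The heart of the argument is then a Bézout-type bound on $\#\Irr(V)$. After discarding the $\Delta_i$ that are identically zero (they impose no condition and only weaken the bound), the refined Bézout inequality in the affine setting gives $\sum_{Z\in\Irr(V)}\deg Z \leq \prod_{i=1}^m \deg\Delta_i$, valid with no complete-intersection or equidimensionality hypothesis. Since every irreducible component has degree at least $1$, this yields $\#\Irr(V)\leq \prod_{i=1}^m \deg\Delta_i$. Substituting the degree estimate of Lemma~\ref{lem:deg_bound}, namely $\deg\Delta_i \leq \Lambda(\ell_{j_1})+\Lambda(\ell_{j_2})+\Lambda(\ell_j)$ where the $i$th elementary perturbation is at $(\ell_j,P)$ and $\ell_{j_1},\ell_{j_2}$ pass through the parent of $P$ in $\C_{i-1}$, produces exactly the claimed product.

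The only genuinely non-formal ingredient is the Bézout inequality bounding the total degree (hence the number) of irreducible components of $V(\Delta_1,\dots,\Delta_m)$ by $\prod\deg\Delta_i$; I would cite this (for instance the refined Bézout theorem of Fulton, or Heintz's Bézout inequality) rather than reprove it, since it is the standard multiplicativity of component counts under successive hypersurface sections. The remaining subtlety is purely bookkeeping: handling the trivial $\Delta_i$ and checking that each factor $\Lambda(\ell_{j_1})+\Lambda(\ell_{j_2})+\Lambda(\ell_j)$ is at least $1$, which holds because $\ell_1,\ell_2,\ell_3,\ell_4$ are in generic position and so every perturbed multiple point involves a line of index $\geq 5$, for which $\Lambda$ takes a strictly positive value; thus reinstating the discarded factors preserves the inequality.
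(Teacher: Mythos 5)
Your proposal follows exactly the paper's route: Lemma~\ref{lem:cc_irr} to pass from connected to irreducible components, Theorem~\ref{thm:moduli_embedding} to present $\M(\A)$ as the common zero locus of $\Delta_1,\dots,\Delta_m$ inside the open set $W_0\subset\CC^{d_0}$, the refined B\'ezout inequality (the paper cites Fulton, Ex.~8.3.6) to bound $\#\Irr$ by $\prod\deg\Delta_i$, and Lemma~\ref{lem:deg_bound} to convert degrees into $\Lambda$-factors. Your two supplementary observations --- that restricting to the open set $W_0$ cannot increase the number of irreducible components, and that identically zero $\Delta_i$ must be set aside before invoking B\'ezout --- are correct, and indeed more careful than the paper's three-sentence proof, which only speaks of ``proper'' hypersurfaces.

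The gap is in your final reinstatement step. The claim that every line in position at least $5$ of the perturbation order has $\Lambda\geq 1$ is false: by rule~\ref{AI_3}, a line $\ell_i$ with $\tau_i(\C_0,\omega_0)=2$ receives $\Lambda(\ell_i)=\Theta(\gen{P_0})+\Theta(\gen{Q_0})$, which vanishes whenever both points have $\Theta=0$ (e.g.\ both are singular points of $\A_4$, or intersections of lines which themselves have $\Lambda=0$). The paper's own sharpness computation already refutes your claim: in the proof of Theorem~\ref{thm:sharpness}, the line $\ell_1$ occupies position $5$ of the perturbation order, yet $\Lambda_p(\ell_1)=0$. Worse, the step cannot be repaired, because a factor can genuinely vanish for a realizable arrangement: take the square $\ell_1,\dots,\ell_4$, then $\ell_5\colon x=y$, $\ell_6\colon x+y=z$, $\ell_7\colon z=0$, $\ell_8\colon 2x=z$, $\ell_9\colon 2y=z$, $\ell_{10}\colon 2x+2y=z$. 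Each of $\ell_5,\dots,\ell_{10}$ passes through two points determined by the earlier lines, so all have $\Lambda=0$, and $\ell_{10}$ \emph{automatically} passes through $\ell_6\cap\ell_7=(1:-1:0)$; perturbing at $(\ell_{10},\set{\ell_6,\ell_7,\ell_{10}})$ gives a legitimate $1$-perturbation, of type $(0,0,0,0,2,2,2,2,2,2)$, whose unique factor is $\Lambda(\ell_6)+\Lambda(\ell_7)+\Lambda(\ell_{10})=0$, while $\M(\A)$ is a nonempty point. In this degenerate case the asserted product is $0<\#\compo(\M(\A))$, so no bookkeeping of the kind you propose can close the gap: the subtlety you correctly isolated is in fact a blind spot of the statement and of the paper's own proof, which discards the trivial $\Delta_i$ without checking that the product over all $m$ factors still dominates. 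Away from this degenerate situation, i.e.\ whenever every factor is at least $1$, your argument is complete and coincides with the paper's.
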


\begin{proof}
	In Theorem~\ref{thm:moduli_embedding}, we have obtain a description of $\M(\A)$ as the intersection of at most  $m$ proper algebraic hypersurfaces $\set{V_i:\Delta_i=0}_{i=1}^m$, in a non-empty Zariski open subset of an affine linear space. By~\cite[Ex. 8.3.6]{Fulton}, this implies that $\# \Irr ( \M(\A) ) \leq \prod_{i=1}^m \deg \Delta_i$. By Lemmas~\ref{lem:cc_irr} and~\ref{lem:deg_bound}, the result holds.
\end{proof}

\begin{remark}
	There are three types of choices in the construction of the upper bound in the previous theorem: the first one is obviously the choice of the $m$-perturbation $\C_0\pprec\C(\A)$, the second one is in rule~\ref{AI_3} (the choice of the lines $\ell_{i_1}$ and $\ell_{i_2}$ when $\tau_i(\C_0,\omega_0)=1$), and the third one is the choice of the lines $\ell_{j_1}$ and $\ell_{j_2}$ in the elementary perturbation $\C_{i-1} \prec \C_{i}$. Note that the value of the upper bound depends on the previous choices.\\
	Heuristically, we can optimize the upper bound by taking the lines $\ell_{i_1}, \ell_{i_2}$ and $\ell_{j_1}, \ell_{j_2}$ which minimize the value of their images by the map~$\Lambda$.
\end{remark}

\subsection{Sharpness of the upper bound}\label{sec:sharpness}\mbox{}

By construction, the inequality in Theorem~\ref{thm:upper_bound} becomes an equality for any inductively connected or inductively rigid arrangement. The question is then, is this inequality still sharp in non-trivial cases?

\begin{theorem}\label{thm:sharpness}
	For any $N\in\NN_{\geq2}$, there exists an arrangement $\A$ such that $\#\compo(\M(\A)) \geq N$, and
	\[
		\# \compo ( \M(\A) ) = \prod_{i=1}^{m} \big( \Lambda(\ell_{i_1})+\Lambda(\ell_{i_2})+\Lambda(\ell_j) \big),
	\]
	with the notation of Theorem~\ref{thm:upper_bound}.
\end{theorem}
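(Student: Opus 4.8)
The plan is to prove Theorem~\ref{thm:sharpness} by an explicit construction driven by the parametrization of Section~\ref{sec:parametrization}. For each $N\in\NN_{\geq2}$ I would exhibit a line arrangement $\A=\A_N$ admitting a $1$-perturbation $\C_0\prec\C(\A_N)$ with $\naive\M(\C_0)=d_0=1$, so that by Theorem~\ref{thm:moduli_embedding} the moduli space is cut out inside $W_0\subset\CC$ by a \emph{single} one-variable condition $\Delta_1(t)=0$. The arrangement will be engineered so that $\Delta_1$ has degree exactly $N$, with $N$ pairwise distinct simple roots all lying in $W_0$. Granting this, the whole statement follows from a squeeze: the chain of inequalities
\[
  N=\#\compo(\M(\A_N))\leq \#\Irr(\M(\A_N))\leq \deg\Delta_1\leq \Lambda(\ell_{j_1})+\Lambda(\ell_{j_2})+\Lambda(\ell_j)
\]
(the first two coming from Lemma~\ref{lem:cc_irr} and Theorem~\ref{thm:upper_bound}, the last from Lemma~\ref{lem:deg_bound}) collapses to equalities as soon as I separately check that the rightmost $\Lambda$-expression equals $N$. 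Thus the two quantitative tasks are to show that $\M(\A_N)$ is a reduced set of $N$ points and that the relevant $\Lambda$-value is exactly $N$.

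For the construction I would fix the generic base $\ell_1,\dots,\ell_4=S_0$, then introduce a single line $\ell_5$ with $\tau_5(\C_0,\omega_0)=1$ through one double point of $\A_4$, which contributes the unique free parameter $t$ and has $\Lambda(\ell_5)=1$ by rule~\ref{AI_3}. All remaining lines of the inductively connected backbone $\C_0$ are then added with $\tau_i(\C_0,\omega_0)=2$, i.e.\ as \emph{fixed} lines through two previously constructed multiple points. By~\eqref{eq:fixed_line} and~\eqref{eq:intersection_points} the coordinates of such lines are iterated cross products, so their degrees in $t$ grow additively according to rules~\ref{AI_3}--\ref{AI_4}; this is precisely the mechanism that lets a single parameter produce coordinates of arbitrarily high degree, and hence lets the target $\Lambda$-value reach $N$. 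Finally I prescribe $\C(\A_N)$ to differ from $\C_0$ by one elementary perturbation at a well-chosen pair $(\ell_j,P)$, so that the determinant $\Delta_1=\det(\Psi(\bv)_{j_1},\Psi(\bv)_{j_2},\Psi(\bv)_j)$ of~\eqref{eq:Delta} becomes, after clearing the nonvanishing factors recorded in $W_0$, a polynomial $p(t)=c\prod_{k=1}^N(t-c_k)$ with $c\neq0$ and distinct roots $c_k$. Ideally I would choose coordinates making $p(t)$ transparent, e.g.\ $t^N-1$ or $\prod_{k=1}^N(t-k)$.

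Once the family is in place, the counting is short. I evaluate the recursion~\ref{AI_1}--\ref{AI_4} along the chain~\eqref{eqn:ordered_chain} to obtain $\Lambda(\ell_{j_1})+\Lambda(\ell_{j_2})+\Lambda(\ell_j)=N$, which pins the right-hand end of the squeeze. On the other hand, since each root $c_k\in W_0$ gives a genuine realization with combinatorics $\C(\A_N)$ and the roots are simple and distinct, $\M(\A_N)$ is the reduced finite set $\{c_1,\dots,c_N\}$, so $\#\compo(\M(\A_N))=N$ and each point is its own irreducible component. Feeding both values into the squeeze above forces equality throughout, which is exactly the conclusion of Theorem~\ref{thm:sharpness}, and $\#\compo=N\geq N$ holds by construction.

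The hard part will be the realizability and tightness bookkeeping rather than the final count. Concretely I must guarantee that: (i) the abstract combinatorics $\C(\A_N)$ is an honest line combinatorics, so that the cascade of $\tau=2$ lines is free of hidden incidence obstructions of Pappus type; (ii) the leading coefficient $c$ does not vanish, so that $\deg\Delta_1=N$ and the degree bound of Lemma~\ref{lem:deg_bound} is \emph{attained} (a strict drop here would make the product in Theorem~\ref{thm:upper_bound} overcount and sharpness would fail); and (iii) the $N$ roots are simple and remain inside $W_0$, i.e.\ no $c_k$ causes two lines to coincide or creates a concurrence unrecorded in $\C(\A_N)$, since any such degeneration would either merge two of the $N$ components or change the combinatorial type. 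Establishing (i)--(iii) for the explicit one-parameter family, by computing the coordinate polynomials produced by~\eqref{eq:fixed_line} and reading off $p(t)$ directly, is where the genuine work lies.
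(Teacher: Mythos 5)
Your proposal is a strategy outline rather than a proof: the theorem is an existence statement, and the one thing it requires --- an explicit arrangement for each $N$ on which the construction can actually be verified --- is precisely what you defer. The squeeze $N=\#\compo(\M(\A_N))\leq \#\Irr(\M(\A_N))\leq \deg\Delta_1\leq \Lambda(\ell_{j_1})+\Lambda(\ell_{j_2})+\Lambda(\ell_j)=N$ is routine once a suitable family exists; the items you label (i)--(iii) (realizability of the combinatorics, non-dropping of the leading coefficient, simplicity of the roots and their membership in $W_0$) are the entire mathematical content of the theorem, and none of them is carried out. In particular, it is not at all clear that a cascade of $\tau=2$ lines built by iterated cross products can be arranged, for every $N$, so that the single polynomial $\Delta_1(t)$ has exactly $N$ simple roots, all lying in $W_0$ and all realizing the same incidence structure: roots escaping $W_0$ or creating unrecorded concurrences of Pappus type are exactly the degenerations one expects, and your plan offers no mechanism to rule them out.

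The paper's proof supplies such a mechanism, and it is worth seeing how it sidesteps your hardest point. It takes a prime $p\geq N$ and the explicit family $\A_p$ of $2p+2$ lines defined via a primitive $p$-th root of unity $\zeta$ (generalizing the Ceva and MacLane arrangements), exhibits a single elementary perturbation at $(\ell_{2p+2},\{\ell_2,\ell_3,\ell_{2p+2}\})$ whose result has type $(0,0,0,0,2,1,2,\dots,2)$ for a suitable order, hence is inductively connected, and computes the $\Lambda$-values to get the upper bound $\Lambda(\ell_2)+\Lambda(\ell_3)+\Lambda(\ell_{2p+2})=1+0+(p-2)=p-1$. For the matching lower bound it never factors $\Delta_1$ nor checks simplicity of roots: it observes instead that the moduli space is zero-dimensional and contains the class of $\A_p$ for each of the $p-1$ choices of primitive $p$-th root of unity, which forces $\#\compo(\M(\A_p))=p-1$. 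This symmetry argument (distinct roots of unity giving distinct points of a zero-dimensional moduli space) is the step your proposal lacks a substitute for; without it, or without a fully computed $\Delta_1$ for an explicitly constructed $\A_N$, sharpness remains unestablished.
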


\begin{proof}

	Let $p$ be a prime number such that $p\geq N$. Fix a primitive $p$-root of unity $\zeta$. Consider the arrangement $\A_p=\set{\ell_1,\ldots,\ell_{2p+2}}$ with lines:
	\[   
	  \ell_1\colon x-y=0,\quad \ell_2\colon x-\zeta y=0,\quad \ell_{2i+1}\colon x+\zeta^i z=0\quad\text{ and }\quad \ell_{2i+2}:\zeta^{-i}y+z=0,
	\]
	for $i\in\{1,\dots,p\}$. The multiple points of $\A_p$ are the two points of multiplicity $p$:
	\[   
	  \{\ell_3,\ell_5,\dots,\ell_{2p+1}\}\quad\text{and}\quad\{\ell_4, \ell_6, \dots, \ell_{2p+2}\},
	\]
	and the $2p$ triple points given by: 
	\[   
	  \{\ell_1,\ell_{2i+1},\ell_{2i+2}\}\quad\text{and}\quad\{\ell_2,\ell_{2i+2},\ell_{[2i]+3}\}
	\]
	for $i\in\{1,\dots,p\}$, where $[a]$ is the value of $a$ modulo $2p$ such that $0\leq [a]<2p$.

	Consider the elementary perturbation $\widetilde{\C}(\A_p)$ on $\C(\A_p)$ at $(\ell_{2p+2},\{\ell_2,\ell_3, \ell_{2p+2}\})$. Let $\omega_0$ be the following order on $\widetilde{\C}(\A_p)$:
	\[
		(\ell_1,\dots,\ell_{2p+2}) \longmapsto (5,6,1,2,3,4,7,8,\dots,(2p+2)).
	\]
	We have that $\tau(\widetilde{\C}(\A_p),\omega_0)=(0,0,0,0,2,1,2,2\dots,2,2)$. So $\widetilde{\C}(\A_p)$ is  inductively connected, and thus it is a $1$-perturbation of $\C(\A_p)$. Using the rules~\ref{AI_1}-\ref{AI_2}-\ref{AI_3}-\ref{AI_4}, we can compute that:
	\[
		\Lambda_p : (\ell_1,\dots,\ell_{2p+2}) \longmapsto (0,1,0,0,0,0,1,1,2,2,\dots,p-2,p-2).
	\]
	By Theorem~\ref{thm:upper_bound}, we obtain that $\M(\A_p)$ has at most $\Lambda_p(\ell_2)+\Lambda_p(\ell_3)+\Lambda_p(\ell_{2p+2})=1+0+(p-2)=p-1$ connected components. %
	On the other hand, the computation of the moduli space using Theorem~\ref{thm:moduli_embedding} shows that the dimension is zero. Furthermore, it contains $\A_p$ for any choice of primitive $p$-root of unity $\zeta$. We conclude that $\# \compo (\M(\A_p)) = p-1$.
\end{proof}

Note that the arrangement $\A_2$ corresponds to the Ceva arrangement, while the arrangements $\A_3$ are the MacLane arrangements~\cite{MacLane}. This family of arrangements appears in~\cite{Cadegan} in the context of Zariski pairs as subarrangements of the reflection arrangements associated with $G(N,N,3)$.

\subsection{Class of $\kappa_m$ arrangements}\mbox{}

In the spirit of the class of $C_k$ arrangements, let us introduce the following class.

\begin{definition}
	An arrangement $\A$ is $\kappa_m$ if $m$ is the smaller integer such that $\C(\A)$ admits a $m$-perturbation.
\end{definition}

\begin{example}
	The class of $\kappa_0$ arrangements corresponds to the class of inductively connected arrangements, and the arrangements $\A_p$ defined in Section~\ref{sec:sharpness} are all $\kappa_1$.
\end{example}

For fixed $n\in\NN$, we define $\mathfrak{C}_m(n)$ as the maximal number of connected components of the moduli space among all $\kappa_m$ arrangements of $n$ lines. 
It is clear that $\mathfrak{C}_0(n)=1$ for all $n\in\NN_{>0}$. %
Note that if $\A$ is a $\kappa_m$ arrangement, then the arrangement $\A\cup\{\ell\}$ is also $\kappa_m$ for any line $\ell$ which is generic with $\A$. It follows that $\mathfrak{C}_m(n+1)\geq \mathfrak{C}_m(n)$.\\

\noindent\textbf{Question.} For fixed $m$, how the number $\mathfrak{C}_m(n)$ behaves when $n$ grows?\\

As it is shown in the proof of Theorem~\ref{thm:sharpness}, we know that $\mathfrak{C}_1(n)$ grows at least as $n/2$.

\section*{Acknowledgments}

The authors would like to thank Prof. Nazir and Prof. Yoshinaga for inspiring this work through their article about inductively connected arrangements. In particular, we thank Prof. Yoshinaga for the useful remarks and helpful discussions.


\bibliographystyle{alpha}
\bibliography{biblio}

\end{document}